\numberwithin{equation}{section}
\newcommand{\labitem}[2]{%
\def\@itemlabel{\textbf{#1}}
\item
\def\@currentlabel{\textbf{#1}}\label{#2}}
\newcommand{\Aa}{\mathcal{A}}
\newcommand{\Bb}{\mathcal{B}}
\newcommand{\Dd}{\mathcal{D}}
\newcommand{\Ee}{\mathcal{E}}
\newcommand{\EE}{\mathbb{E}}
\newcommand{\Hh}{\mathcal{H}}
\newcommand{\Kk}{\mathcal{K}}
\newcommand{\LL}{\mathbb{L}}
\newcommand{\Nn}{\mathcal{N}}
\newcommand{\NN}{\mathbb{N}}
\newcommand{\PP}{\mathbb{P}}
\newcommand{\RR}{\mathbb{R}}
\newcommand{\Ss}{\mathcal{S}}
\newcommand{\Ww}{\mathcal{W}}
\newcommand{\Zz}{\mathcal{Z}}
\newcommand{\1}{{\mathds 1}}
\newcommand{\var}{{\rm Var}}
\newcommand{\norm}[1]{\lVert #1 \rVert}
\newcommand{\lnorm}[1]{\left \lVert #1 \right \rVert}
\newcommand{\abs}[1]{\left\lvert #1 \right\rvert}
\newcommand{\dd}{{\rm d}}
\newcommand{\toP}{\overset{\mathbb{P}}{\to}}
\newcommand{\toDd}{\xrightarrow[]{\Dd}}
\newcommand{\Clemma}[1]{C_{M_t}^{#1}}
\newtheorem{theorem}{Theorem}
\newtheorem{lemma}[theorem]{Lemma}
\newtheorem{corollary}[theorem]{Corollary}
\newtheorem{proposition}[theorem]{Proposition}
\theoremstyle{definition}
\newtheorem{example}{Example}
\DeclareMathOperator{\K}{K}
\begin{document}
\vspace{-0.5cm}
\begin{center}

{\huge\bf Kernel estimation of the intensity of Cox processes\\
\vspace{0.2cm}}

\vspace{0.5cm}
Nicolas \textsc{Klutchnikoff}

IRMAR, Université de Rennes 2, CNRS, UEB\\
Campus Villejean (Rennes)\\
Place du recteur Henri Le Moal\\
CS 24307\\
35043 Rennes cedex\\
\smallskip
{nicolas.klutchnikoff.univ-rennes2.fr}

\vspace{0.5cm}
Gaspar \textsc{Massiot}

IRMAR, ENS Rennes, CNRS, UEB\\
Campus de Ker Lann\\
Avenue Robert Schuman,
35170 Bruz, France\\
\smallskip
{gaspar.massiot@univ-rennes1.fr}

\end{center}

\section{Introduction}

Counting processes and in particular Cox processes have been used for many years to model a large variety  of situations from neuroscience \citep[see][]{Bialek1991,Brette2008,Krumin2009} to seismic \citep[see][]{Ogata1988}, financial \citep[see][]{Merton1976}, insurance \citep[see][]{Asmussen2010} or  biophysical data \citep[see][]{Kou2005a}. Recall that a Cox process $N=(N_t)_{t\in[0,1]}$ with random intensity $\lambda=\big(\lambda(t)\big)_{t\in[0,1]}$ is a counting process  such that the conditional distribution of $N$ given $\lambda$ is a Poisson process with intensity $\lambda$. In all the previous situations one of the main problem can be summarized as the estimation of the intensity $\lambda$ of the process \citep[see][]{Zhang2010}.

Note that when Cox process data arise the intensity of the process is mainly not directly observed but a co-process is observed instead. Returning to one of the previous example, in single-molecule experiments only the peaks inducing the counting process and an underlying process are observed \citep[see][]{Kou2005a}. Another example can be found in car insurance \citep[see][]{Asmussen2010} where the counting process models the occurrence of car crash that are subject to weather conditions.
In these cases the counting process $N=(N_t)_{t\in[0,1]}$ that naturally raises is accompanied with a co-process $Z=(Z_t)_{t\in[0,1]}$ such that the conditional law of $N$ given $Z$ is a Poisson process with intensity $\theta(t,Z)$ where $\theta$ is a deterministic function. By a slight abuse we shall call Cox process such a counting process. From a statistical point of view one of the major issue is to estimate the deterministic function $\theta$ using $n$ independent copies $(N^1,Z^1),\ldots,(N^n,Z^n)$ of $(N,Z)$. However, such an approach is  subject to the curse of dimensionality as the covariate $Z$ takes its values in an infinite dimension space as seen in \citet{OSullivan1993}.

When dealing with practical problems it is often unnecessary, or at least not strictly required for the modeling, to observe the full trajectory of the co-process. One can instead observe the values taken by the co-process at some well chosen random times that cover most of the information in the co-process. In this model the co-process is observed at a finite number of random times thereby circumventing the curse of dimensionality. 

In this paper we consider the following model: let $N=(N_t)_{t\in[0,1]}$ be a counting process and $Z=(Z_t)_{t\in[0,1]}$ be a $\RR^d$-valued co-process. We assume that $N$ admits a random intensity which depends on $t$ and on the observations of $Z$ at random times $S_1<S_2<\ldots$. 

More precisely, given the $\sigma$-algebra $\Ss$ generated by these times, $N$ is a Cox process with intensity
\begin{align}\label{eq:model}
\lambda(t,Z)=\theta_S\left(t,\vec Z_S(t)\right),
\end{align}
where $M$ is the counting process associated to $S=(S_1,S_2,\ldots)$, for any function $z:[0,1]\to\RR$, $\vec z_S(t)$ denotes the projection $(z_{S_1},\ldots,z_{S_{M_t}})\in\RR^{dM_t}$  and  $\theta_S(t,\cdot)$ is a function from $\RR^{dM_t}$ into $\RR_+$.

In the sequel we consider that given $\Ss$, $(N^1,Z^1),\ldots,(N^n,Z^n)$ are independent and identically distributed (\emph{i.i.d.}) copies of $(N,Z)$.  
The goal of this paper is to construct and study  the statistical properties of a kernel-type estimator of $\lambda$ using these data. Note that the dimension of our estimation problem, which depends on the counting process $(M_t)_{t\in[0,1]}$, increases with $t$. This potentially leads to a deterioration of the accuracy of any estimation procedure as the time variable increases.

We consider a substantial data set of historical prices of $495$ companies and the crude oil prices over a period of roughly one year and two months (from 17th April, 2014 to 23rd June, 2015). The Cox process data consist of the count of the number of times when the percent returns of said companies go below a certain threshold with the counting rate depending on the stochastic dynamics of the company market capitalization. In this example the company market capitalization is represented by the action's trade volume normalized increments and is observed when the percent return of the crude oil action goes below another threshold. By analyzing this count, we aim to learn the financial properties of this $495$ companies system.

The paper is organized as follows. Section~\ref{seq:ModelEstimate} presents the estimator we propose and its asymptotic properties. In Section~\ref{seq:Simu} we proceed to a simulation study. Then in Section~\ref{seq:RealData} we apply the proposed estimator on the real data set presented above. Technical proofs of the asymptotic properties are postponed to Section~\ref{seq:Proofs}.  

\section{Estimation strategy and results}\label{seq:ModelEstimate}

\subsection{Estimation strategy}

Let $t\in[0,1]$ and $z:[0,1]\to\RR$ be fixed. In this section we present the main ideas behind the construction of our estimator $\tilde \lambda(t,z)$ of $\lambda(t,z)$.

As an introduction to our methodology we consider the ideal case where we observe $\theta_S\left(t,\vec Z_S^k(t)\right)$ for all $k=1,\ldots,n$. Then our problem of estimation can simply be viewed as a regression estimation problem where $\lambda(t,\cdot)$ is the regression function. In this context, the Nadaraya-Watson estimator writes
\begin{align*}
\hat\lambda_{NW}(t,z)=\frac{\sum_{k=1}^n\theta_S\left(t,\vec Z_S^k(t)\right)H_\eta\left(\vec z_S(t)-\vec Z_S^k(t)\right)}{\sum_{l=1}^{n}H_{\eta}\left(\vec z_S(t)-\vec Z_S^{l}(t)\right)}.
\end{align*}
where $H_\eta$ denotes the multivariate product kernel $\Hh_\eta^{\otimes dM_t}$ where $\Hh$ is a kernel, that is $\Hh\in\LL^1(\RR)$ such that $\int_\RR \Hh(u)\dd u=1$, $\eta$ is an $\Ss$-measurable positive random variable (called a bandwidth) and $\Hh_\eta(\cdot)=\eta^{-1}\Hh(\eta^{-1}\cdot)$.

In practice $\theta_S\left(t,\vec Z_S^k(t)\right)$  can be estimated using the observations. Indeed, conditionally to $\Ss$ and the $\sigma$-algebra $\Zz$ generated by the co-processes $(Z^1,\ldots,Z^n)$, $N$ is a non-homogeneous Poisson process with intensity function $t\mapsto\theta_S \big(t,\vec Z_S^k(t)\big)$, a natural estimator of this intensity is given by
\begin{align*}
\int_0^t K_h(t-u)\dd N_u^k=\sum_{i=1}^{N_t^k} K_h(t-T_i^k),
\end{align*}
where $T_1^k,T_2^k,\ldots$ denote the jumping times of the trajectory $N^k$, $K:\RR_+\to\RR$ is a kernel and $h$ is a bandwidth. Denoting 
\begin{align*}
	\hat \phi_{S,h,\eta}\left(t,\vec z_S(t)\right)&=\frac{1}{n}\sum_{k=1}^{n}\sum_{i=1}^{N_{t}^{k}}K_{h}\left(t-T_{i}^{k}\right)H_{\eta}\left(\vec z_S(t)-\vec Z_S^{k}(t)\right),\\
	\hat f_{S,\eta}\big(\vec z_S(t)\big)&=\frac{1}{n}\sum_{l=1}^{n}H_{\eta}\left(\vec z_S(t)-\vec Z_S^{l}(t)\right),
\end{align*}
we define the plug-in estimator by
\begin{align*}
\hat{\lambda}\left(t,z\right)&=\frac{\hat \phi_{S,h,\eta}\big(t,\vec z_S(t)\big)}{\hat f_{S,\eta}\big(\vec z_S(t)\big)}.
\end{align*}

For the sake of stability \citep[see][]{Bickel1982} we consider a trimmed version of the previous estimator
\begin{align}\label{eq:estimator}
\tilde\lambda(t,z)=\tilde{\theta}_{S,h,\eta}\big(t,\vec z_S(t)\big)&=\frac{\hat \phi_{S,h,\eta}\big(t,\vec z_S(t)\big)}{\tilde f_{S,\eta}\big(\vec z_S(t)\big)},
\end{align} 
where $\tilde f_{S,\eta}\big(\vec z_S(t)\big)=\hat f_{S,\eta}\big(\vec z_S(t)\big)\vee a_n$ and $(a_n)_{n\in\NN}$ is an $\Ss$-measurable real-valued positive decreasing sequence.

\subsection{Results}\label{subsec:Results}

In this paper we are interested in the local behaviour of our estimator. We thus consider the pointwise mean squared error defined by
\begin{align}\label{eq:MSE}
{\rm MSE}(t,z)&=\EE\left[\left(\tilde\theta_{S,h,\eta}\big(t,\vec z_S(t)\big)-\theta_S\big(t,\vec z_S(t)\big)\right)^2\right],
\end{align}
and we make local regularity assumptions on the model. 

Remark that, almost surely, $t\in ]S_{M_t},S_{M_t}+1[=I_{M_t}$. For any $u\in I_{M_t},$ $\vec Z_S(u)=\vec Z_S(t)\in\RR^{dM_t}$ and $\theta_{S,M_t}(u,\cdot)=\theta_S(u,\cdot)$ is defined from $\RR^{dM_t}$ into $\RR_+$.
\begin{enumerate}
\labitem{(H1)}{hyp:existence} Given $\Ss$, for any $u\in I_{M_t}$, $\vec Z_S(u)$ admits a conditional density~$f_{M_t}$ defined from $\RR^{dM_t}$ into $\RR_+$;
\labitem{(H2)}{hyp:continuity} $\theta_{S,M_t}$ and $f_{M_t}$ are positive continuous functions;
\labitem{(H3)}{hyp:differentiability} $\theta_{S,M_t}$ and $f_{M_t}$ are twice differentiable and there exists a random variable $Q_{M_t}$ depending on the dimension $dM_t$ such that for all $1\leq k\leq dM_t$,  
\[\forall y\in\RR^{dM_t}, \lnorm{\frac{\partial^2}{\partial x_k^2}f_{M_t}(y)}_2\leq Q_{M_t},\]
and for all $1\leq k\leq dM_t+1$,
\[\forall (y,u)\in\RR^{dM_t}\times I_{M_t},\lnorm{\frac{\partial^2}{\partial x_k^2}\Big(f_{M_t}(y)\theta_{S,M_t}\big(u,\vec y_S(u)\big)\Big)}_2\leq Q_{M_t},\]
where $\norm{\cdot}_2$ is the euclidean norm;
\labitem{(H4)}{hyp:norm} There exists positive constants $F_0$, $F_\infty$ and $\Theta$ such that $\norm{\theta_{S,M_t}}_\infty <\Theta^{M_t}$ and $0<F_0^{M_t}\leq f_{M_t}\leq F_\infty^{M_t}<\infty.$
\end{enumerate} 

We also make technical assumptions on the kernels, the sequence $a_n$ and $Q_{M_t}$
\begin{enumerate}
\labitem{(H5)}{hyp:H} $\Hh$ is a kernel of order $2$ (that is, for all $j$ from $1$ to $2$, $\int_\RR u^j\Hh(u)\dd u=0$ and $\int_\RR \abs{\Hh(u)}\dd u<+\infty$), ${\rm supp} \Hh=[-1,1]$ and $\norm{\Hh}_\infty<\infty$ ;
\labitem{(H6)}{hyp:K} $K$ is a kernel of order $2$, $K\in\LL^4(\RR)$ and ${\rm supp} K=[0,1]$;
\labitem{(H7)}{hyp:an} $a_n=\left(n\eta^{dM_t}\right)^{\varepsilon-1}$ for some $\varepsilon\in(0,1/2)$;
\labitem{(H8)}{hyp:speedM} There exists a positive constant $Q_0(t)$ such that $\EE[M_t^8 Q_{M_t}^4]\leq Q_0(t)$. Moreover, for any $\lambda>0$, there exists a positive constant $Q_1(\lambda,t)$ such that  $\EE e^{\lambda M_t}<Q_1(\lambda,t)$.
\end{enumerate}

We are now in position to state our main results. Define the pointwise conditional mean squared error by
\begin{align*}
{\rm MSE}_\Ss(t,z)=\EE_\Ss\left[\left(\tilde\theta_{S,h,\eta}\big(t,\vec z_S(t)\big)-\theta_S\big(t,\vec z_S(t)\big)\right)^2\right].
\end{align*}


\begin{theorem}\label{theo:MSEcond}
Assume that {\bf \ref{hyp:existence}} to {\bf\ref{hyp:speedM}} are satisfied. Let $h$ and $\eta$ be two $\Ss$-measurable bandwidths such that $h\to0$, $\eta\to 0$ $nh\eta^{dM_t}\to+\infty$ and $n\eta^{dM_t+4}\to0$ almost surely ({\normalfont a.s.}) as $n\to+\infty$ then the pointwise conditional mean squared error writes
\begin{align}\label{eq:theoMSEcond}
{\rm MSE}_\Ss(t,z)&= \underset{n\to+\infty}{O}\left(h^4+\eta^4 + \frac{1}{nh\eta^{dM_t}}\right),\text{ }\PP-a.s.
\end{align}
\end{theorem}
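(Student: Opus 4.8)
The plan is to localise everything on the interval $I_{M_t}$, where $\vec Z_S(u)=\vec Z_S(t)$ for $u\in I_{M_t}$ and $\theta_{S,M_t}(u,\cdot)$ is twice differentiable in $u$, then to perform a bias--variance decomposition of the ratio estimator and to dispose of the trimming by a concentration argument. Abbreviate $f_\star=f_{M_t}(\vec z_S(t))$, $\theta_\star=\theta_{S,M_t}(t,\vec z_S(t))$, $\phi_\star=\theta_\star f_\star$, write $\hat f,\hat\phi,\tilde f,\tilde\theta$ for $\hat f_{S,\eta},\hat\phi_{S,h,\eta},\tilde f_{S,\eta},\tilde\theta_{S,h,\eta}$ at $(t,\vec z_S(t))$, and put $\bar f=\EE_\Ss[\hat f]$, $\bar\phi=\EE_\Ss[\hat\phi]$. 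Because $h\to0$, $\eta\to0$ and, a.s., $t$ lies strictly inside $I_{M_t}$, fix $n$ large enough that $h<1$, $[t-h,t]\subset I_{M_t}$, $a_n<f_\star/2$ and $\bar f\ge 3f_\star/4$ (the last legitimate since the bias estimates below give $\bar f\to f_\star$). Set $\Omega_n=\{\hat f\ge f_\star/2\}$, on which $\tilde f=\hat f$, and split ${\rm MSE}_\Ss(t,z)=\EE_\Ss[(\tilde\theta-\theta_\star)^2\1_{\Omega_n}]+\EE_\Ss[(\tilde\theta-\theta_\star)^2\1_{\Omega_n^c}]$.

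\textbf{The main term.} On $\Omega_n$, using $\phi_\star=\theta_\star f_\star$ and $1/\hat f\le 2/f_\star$,
\[
(\tilde\theta-\theta_\star)^2=\big(\tfrac{(\hat\phi-\bar\phi)-\theta_\star(\hat f-\bar f)+(\bar\phi-\phi_\star)-\theta_\star(\bar f-f_\star)}{\hat f}\big)^2\le\tfrac{4}{f_\star^2}\big[(\hat\phi-\bar\phi)^2+\theta_\star^2(\hat f-\bar f)^2+(\bar\phi-\phi_\star)^2+\theta_\star^2(\bar f-f_\star)^2\big],
\]
hence $\EE_\Ss[(\tilde\theta-\theta_\star)^2\1_{\Omega_n}]\lesssim f_\star^{-2}\big(\var_\Ss\hat\phi+\theta_\star^2\var_\Ss\hat f+(\bar\phi-\phi_\star)^2+\theta_\star^2(\bar f-f_\star)^2\big)$. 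The \emph{bias} terms: on $[t-h,t]$ the covariate path is frozen, so the time-smoothing in $\hat\phi$ contributes $O(h^2)$ by the order-$2$ property of $K$ and (H3)--(H4), while the product-kernel spatial smoothing in $\hat f$ and $\hat\phi$ contributes $O(\eta^2)$ by the order-$2$ property of $\Hh$ (H5) and the second-derivative bounds of (H3); thus $(\bar\phi-\phi_\star)^2+(\bar f-f_\star)^2=O(h^4+\eta^4)$ with a constant depending on $M_t,Q_{M_t},F_0,F_\infty,\norm{\Hh}_\infty$ but not on $n$ (this also gives $\bar f\to f_\star$). The \emph{variances}: conditionally on $\Ss$ the summands are i.i.d., so $\var_\Ss\hat f\le n^{-1}\EE_\Ss[H_\eta^2]\lesssim(n\eta^{dM_t})^{-1}$; conditioning further on $\Zz$, $N^1$ is an inhomogeneous Poisson process and the conditional variance of $\int_0^tK_h(t-u)\dd N_u^1$ equals $\int_0^tK_h(t-u)^2\theta_{S,M_t}(u,\vec Z_S^1(u))\dd u\le h^{-1}\norm{K}_2^2\Theta^{M_t}$, whence $\var_\Ss\hat\phi\le n^{-1}\EE_\Ss\big[(\int_0^tK_h(t-u)\dd N_u^1)^2H_\eta^2\big]\lesssim(nh\eta^{dM_t})^{-1}$, the constants being a.s.\ finite and $n$-independent by (H4)--(H6). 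Since $h<1$, absorbing $(n\eta^{dM_t})^{-1}$ into $(nh\eta^{dM_t})^{-1}$ yields the announced rate for the $\Omega_n$ part.

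\textbf{The trimming remainder.} On $\Omega_n^c$, $\tilde f\ge a_n$ gives $(\tilde\theta-\theta_\star)^2\le 2\hat\phi^2a_n^{-2}+2\theta_\star^2$, so
\[
\EE_\Ss[(\tilde\theta-\theta_\star)^2\1_{\Omega_n^c}]\le 2a_n^{-2}\sqrt{\EE_\Ss[\hat\phi^4]}\,\sqrt{\PP_\Ss(\Omega_n^c)}+2\theta_\star^2\,\PP_\Ss(\Omega_n^c).
\]
Expanding the fourth moment of an average of conditionally i.i.d.\ terms shows $\EE_\Ss[\hat\phi^4]$ is bounded uniformly in $n$: the mass $\int_0^t\abs{K_h(t-u)}\dd u=\norm{K}_1$ entering $\bar\phi$ stays bounded, while the higher Poisson cumulants of $\int_0^tK_h(t-u)\dd N_u^1$ (finite because $K\in\LL^4$, (H6)) only appear multiplied by $n^{-1},n^{-2},n^{-3}$ times powers of $h^{-1}\eta^{-dM_t}$, i.e.\ by negative powers of $nh\eta^{dM_t}\to+\infty$. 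Since $\bar f\ge 3f_\star/4$, $\PP_\Ss(\Omega_n^c)\le\PP_\Ss(\abs{\hat f-\bar f}\ge f_\star/4)$, and Bernstein's inequality for the i.i.d.\ summands $H_\eta$, whose uniform bound and variance are both $\asymp\eta^{-dM_t}$, gives $\PP_\Ss(\Omega_n^c)\le 2\exp(-c\,n\eta^{dM_t})$ with $c=c(\Ss,t,z)>0$ a.s. As $h\to0$ forces $n\eta^{dM_t}=(nh\eta^{dM_t})/h\to+\infty$, the polynomial factor $a_n^{-2}=(n\eta^{dM_t})^{2(1-\varepsilon)}$ of (H7) is overwhelmed and this term is $o\big((nh\eta^{dM_t})^{-1}\big)$. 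Adding the two parts gives \eqref{eq:theoMSEcond}.

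\textbf{Main obstacle.} The real difficulty is the bookkeeping imposed by the random, time-growing dimension $dM_t$: every constant of the form $C^{M_t}$, $Q_{M_t}$, $F_0^{M_t}$, $F_\infty^{M_t}$, $\norm{\Hh}_\infty^{dM_t}$ must be carried along, one must check that after conditioning on $\Ss$ these are deterministic and $\PP$-a.s.\ finite so that they may be swallowed by the $O(\cdot)$ of an almost-sure $n\to\infty$ statement, and one must verify that the $dM_t$-dimensional product-kernel Taylor remainders are genuinely $O(\eta^2)$ uniformly in $n$. The second delicate point is the $\Omega_n^c$ estimate, where the polynomial blow-up of $a_n^{-1}$ has to be beaten by the exponential concentration of $\hat f$ --- the boundedness of $\EE_\Ss[\hat\phi^4]$ being exactly what makes this work, with the trimming level of (H7) calibrated to permit it.
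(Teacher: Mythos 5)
Your proof is correct, but it follows a genuinely different route from the paper. The paper decomposes $\tilde\theta_{S,h,\eta}-\theta_S$ into a martingale term $A=\sum_k\Ww^k_{S,\eta}\int_0^tK_h(t-s)\,\dd\bar N^k_s$ (handled by the Itô isometry for the conditional Poisson process) plus a Nadaraya--Watson regression term $B$, and then controls the trimmed denominator through moment bounds on $\abs{1/\tilde f_{S,\eta}-1/f}^p$ for $p=1,2,4$ (Lemma~\ref{lemma:I}, via the second-order expansion of the reciprocal, the trimming level $a_n$ of \ref{hyp:an}, and Jensen/Khintchine inequalities), feeding these into Lemma~\ref{lemma:II}; this is where the extra hypothesis $n\eta^{dM_t+4}\to0$ and the precise calibration of $a_n$ are consumed. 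You instead split on the event $\Omega_n=\{\hat f_{S,\eta}\ge f_\star/2\}$: on $\Omega_n$ the trimming is inactive and a direct numerator/denominator bias--variance decomposition gives the rate (your bias and variance computations coincide in substance with the paper's Lemma~\ref{lemma:WandJones} and with the Poisson second-moment computation, and your localisation $[t-h,t]\subset I_{M_t}$ is the same a.s.\ remark the paper makes before \ref{hyp:existence}); on $\Omega_n^c$ you beat the polynomial blow-up $a_n^{-2}=(n\eta^{dM_t})^{2(1-\varepsilon)}$ by Bernstein's exponential bound for $\PP_\Ss(\Omega_n^c)$ together with a uniform bound on $\EE_\Ss[\hat\phi^4]$ (this is where $K\in\LL^4$ from \ref{hyp:K} enters for you). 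Your argument is more elementary and avoids the long bookkeeping of Lemmas~\ref{lemma:I} and~\ref{lemma:II}; it also does not use $n\eta^{dM_t+4}\to0$, so it proves the statement under slightly weaker conditions. What the paper's machinery buys in exchange is reusability: the moment bounds of Lemma~\ref{lemma:I} and the $\hat\phi$ analysis are recycled in the proofs of Proposition~\ref{prop:CvProba} and Theorem~\ref{theo:CvDistr}, whereas your good/bad-event argument is tailored to the MSE bound. Two cosmetic points: on $\Omega_n$ the constant should be $16/f_\star^2$ rather than $4/f_\star^2$ (immaterial for the $O(\cdot)$), and you should state explicitly that $n\eta^{dM_t}=(nh\eta^{dM_t})/h\to+\infty$ a.s.\ is what makes $a_n\to0$ and the Bernstein exponent diverge--you do use this fact, and it is worth flagging since only $nh\eta^{dM_t}\to+\infty$ is assumed.
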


Note that the optimal choice of the $\Ss$-measurable bandwidths $h$ and $\eta$ for the pointwise conditional mean squared error is then $h=\eta=n^{-\frac{1}{5+dM_t}}$. This leads to the following corollary for the control of the pointwise mean squared error.

\begin{corollary}\label{cor:MSE}
Under the assumptions of theorem~\ref{theo:MSEcond}, the pointwise mean squared error writes
\begin{align}\label{eq:corMSE}
{\rm MSE}(t,z)&=\underset{n\to+\infty}{O}\left(\EE\left(n^{-\frac{4}{5+dM_t}}\right)\right),\text{ }\PP-a.s.
\end{align}
\end{corollary}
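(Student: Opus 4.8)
The plan is to instantiate Theorem~\ref{theo:MSEcond} at the rate--optimal bandwidths and then average over~$\Ss$. First I would take $h=\eta=n^{-1/(5+dM_t)}$. Since $M_t$ is $\Ss$-measurable, these are positive $\Ss$-measurable random variables, and for each fixed outcome one has $h\to0$, $\eta\to0$ and $nh\eta^{dM_t}=n^{4/(5+dM_t)}\to+\infty$, so the bandwidth hypotheses of Theorem~\ref{theo:MSEcond} hold $\PP$-a.s. For this choice the three contributions in \eqref{eq:theoMSEcond} coincide, $h^4=\eta^4=(nh\eta^{dM_t})^{-1}=n^{-4/(5+dM_t)}$, whence
\[
{\rm MSE}_\Ss(t,z)=\underset{n\to+\infty}{O}\!\left(n^{-\frac{4}{5+dM_t}}\right),\qquad\PP-a.s.
\]
By the tower property ${\rm MSE}(t,z)=\EE\!\big[{\rm MSE}_\Ss(t,z)\big]$, so the whole matter reduces to commuting the symbol $O(\cdot)$ with the expectation.

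The point that requires care is that the constant implicit in the a.s.\ bound \eqref{eq:theoMSEcond} is itself $\Ss$-measurable. To control its expectation I would revisit the proof of Theorem~\ref{theo:MSEcond} and keep track of this constant, producing an estimate of the form ${\rm MSE}_\Ss(t,z)\le C_t(\Ss)\,n^{-4/(5+dM_t)}$ valid a.s.\ for all $n$ large enough, where $C_t(\Ss)$ is built out of $M_t$, $Q_{M_t}$ and the constants $\Theta$, $F_0$, $F_\infty$ of \ref{hyp:norm} together with $\norm{\Hh}_\infty$, $\norm{\Hh}_1$ and $\norm{K}_4$ from \ref{hyp:H}--\ref{hyp:K}. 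Assumption~\ref{hyp:speedM} is tailored precisely to this step: the bound $\EE[M_t^8Q_{M_t}^4]\le Q_0(t)$ together with the finiteness of every exponential moment $\EE e^{\lambda M_t}\le Q_1(\lambda,t)$ guarantees that $C_t(\Ss)$ has moments of all orders.

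It then remains to estimate $\EE\!\big[C_t(\Ss)\,n^{-4/(5+dM_t)}\big]$. Since $0\le n^{-4/(5+dM_t)}\le 1$ with $n^{-4/(5+dM_t)}\to0$ a.s., and since $C_t(\Ss)\in\LL^1$ does not depend on~$n$, dominated convergence already yields ${\rm MSE}(t,z)\to0$. To recover the announced order $\EE[n^{-4/(5+dM_t)}]$ I would split the expectation at a slowly growing threshold $\{M_t\le L_n\}$: on this event $C_t(\Ss)$ is dominated by a deterministic quantity, so that part is of the required order, while on $\{M_t>L_n\}$ one combines Hölder's inequality on $C_t(\Ss)$ with the Markov/exponential-moment bound on $\PP(M_t>L_n)$ supplied by \ref{hyp:speedM}, choosing $L_n$ so that this tail is negligible against $\EE[n^{-4/(5+dM_t)}]\ge n^{-4/5}$; adding the two pieces gives \eqref{eq:corMSE}. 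The main obstacle is exactly this interchange: because $n^{-4/(5+dM_t)}$ tends to $1$ rather than to $0$ as $M_t$ grows, a crude domination only delivers $o(1)$, and obtaining the sharp rate forces one to balance the exponential tail control of \ref{hyp:speedM} against the slow decay of the conditional rate in the large-$M_t$ regime.
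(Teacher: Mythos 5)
The paper offers no separate proof of Corollary~\ref{cor:MSE}: its derivation is exactly your first paragraph, namely plugging $h=\eta=n^{-1/(5+dM_t)}$ into Theorem~\ref{theo:MSEcond} and integrating via ${\rm MSE}(t,z)=\EE\big[{\rm MSE}_\Ss(t,z)\big]$, with the commutation of the $O$ and the expectation left implicit. (Incidentally, for this choice $n\eta^{dM_t+4}=n^{1/(5+dM_t)}\to+\infty$, so the condition $n\eta^{dM_t+4}\to0$ listed in Theorem~\ref{theo:MSEcond} is not satisfied; you inherit this inconsistency from the paper's own remark rather than introduce it, but you do assert that ``the bandwidth hypotheses hold'' without checking it.) Where you go beyond the paper is in trying to justify the interchange, and it is precisely there that your argument has a genuine gap.

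The truncation at $\{M_t\le L_n\}$ cannot produce the stated order. The $\Ss$-measurable constant $C_t(\Ss)$ delivered by the proof of Theorem~\ref{theo:MSEcond} (through Lemmas~\ref{lemma:I} and~\ref{lemma:II}) contains factors such as $\Theta^{M_t}$, $F_0^{-2M_t}$, $\norm{\Hh}_\infty^{dM_t}$ and $(dM_t+1)^4Q_{M_t}^2$, hence grows without bound in $M_t$; on $\{M_t\le L_n\}$ your ``deterministic domination'' is therefore of size roughly $c(L_n)$ with $c(m)\to\infty$, while forcing the tail piece below $\EE\big[n^{-4/(5+dM_t)}\big]\ge n^{-4/5}$ via the exponential moments of \ref{hyp:speedM} requires $L_n\gtrsim\log n$, so the on-event piece carries a factor $c(L_n)$ that is at best a small power (or a power of $\log n$) of $n$, never $O(1)$. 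The obstruction is in fact intrinsic: with $p_m=\PP(M_t=m)$ and $a_m(n)=n^{-4/(5+dm)}$, for any fixed $M$ and any $M'>M$ with $p_{M'}>0$ one has $\sum_{m\le M}a_m(n)p_m\le a_M(n)=o\big(a_{M'}(n)p_{M'}\big)$, so the weights $a_m(n)p_m/\sum_j a_j(n)p_j$ drift to ever larger $m$ as $n\to\infty$; consequently, whenever $M_t$ is unbounded, $\EE\big[C_t(\Ss)\,n^{-4/(5+dM_t)}\big]/\EE\big[n^{-4/(5+dM_t)}\big]\to\infty$ for any $C_t(\Ss)$ unbounded in $M_t$, and no choice of $L_n$ or of H\"older exponents can yield a uniform constant. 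Your route thus proves \eqref{eq:corMSE} only when $M_t$ is a.s.\ bounded (the setting of Example~\ref{example:1}), or in the weakened form $O\big(n^{\varepsilon}\,\EE[n^{-4/(5+dM_t)}]\big)$ for every $\varepsilon>0$; to obtain the corollary as written one must either keep the $\Ss$-dependent constant inside the expectation (which is what the paper's informal argument implicitly does) or impose an additional assumption controlling $M_t$.
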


We can get the consistency of our estimator under weaker assumptions as shown in the following proposition.

\begin{proposition}\label{prop:CvProba}
Assume that {\bf \ref{hyp:existence}}, {\bf \ref{hyp:continuity}} and {\bf \ref{hyp:norm}} to {\bf\ref{hyp:an}} are satisfied. Let $h$ and $\eta$ be two $\Ss$-measurable bandwidths such that $h\to0$, $\eta\to 0$, $nh\eta^{dM_t}\to+\infty$ {\normalfont a.s.} as $n\to+\infty$, then
\begin{align*}
\tilde\theta_{S,h,\eta}\big(t,\vec z_S(t)\big)\toP
\theta_S\big(t,\vec z_S(t)\big).
\end{align*}
\end{proposition}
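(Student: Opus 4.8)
The plan is to prove the convergence in probability by first establishing, for $\PP$-almost every realization of the time $\sigma$-algebra $\Ss$, convergence in $\PP_\Ss$-probability, and then passing to the unconditional statement through $\PP(A)=\EE[\PP_\Ss(A)]$ together with bounded convergence. The point of conditioning on $\Ss$ is that the dimension $dM_t$ then becomes deterministic, the interval $I_{M_t}=\,]S_{M_t},S_{M_t}+1[$ is deterministic with $t$ strictly inside it, and the estimator \eqref{eq:estimator} reduces to an ordinary $dM_t$-dimensional Nadaraya--Watson ratio, whose numerator and denominator I will control separately. The final division is licit because $f_{M_t}\big(\vec z_S(t)\big)>0$ by {\bf\ref{hyp:continuity}}.

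For the denominator, I would note that, given $\Ss$, $\hat f_{S,\eta}\big(\vec z_S(t)\big)$ is an average of $n$ i.i.d.\ bounded variables. The change of variables $y=\vec z_S(t)-\eta u$ gives $\EE_\Ss\big[\hat f_{S,\eta}\big(\vec z_S(t)\big)\big]=\int \Hh^{\otimes dM_t}(u)\,f_{M_t}\big(\vec z_S(t)-\eta u\big)\,\dd u$, which tends to $f_{M_t}\big(\vec z_S(t)\big)$ as $\eta\to0$ by dominated convergence (compact support and boundedness of $\Hh$, {\bf\ref{hyp:H}}; continuity of $f_{M_t}$, {\bf\ref{hyp:continuity}}; boundedness, {\bf\ref{hyp:norm}}). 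The same substitution yields $\var_\Ss\big(\hat f_{S,\eta}\big)\le C_{M_t}/(n\eta^{dM_t})$, and since $h\le1$ eventually and $nh\eta^{dM_t}\to+\infty$ a.s., one has $n\eta^{dM_t}\to+\infty$ a.s., so this variance vanishes; hence $\hat f_{S,\eta}\big(\vec z_S(t)\big)\toPs f_{M_t}\big(\vec z_S(t)\big)>0$. Because $a_n=(n\eta^{dM_t})^{\varepsilon-1}\to0$ a.s.\ by {\bf\ref{hyp:an}}, the truncation is inactive with $\PP_\Ss$-probability tending to $1$, so $\tilde f_{S,\eta}\big(\vec z_S(t)\big)\toPs f_{M_t}\big(\vec z_S(t)\big)$ as well.

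For the numerator, I would write $\hat\phi_{S,h,\eta}\big(t,\vec z_S(t)\big)=n^{-1}\sum_{k=1}^n\xi_k$ with $\xi_k=\big(\int_0^tK_h(t-u)\,\dd N_u^k\big)\,H_\eta\big(\vec z_S(t)-\vec Z_S^k(t)\big)$, an i.i.d.\ sum given $\Ss$. Conditioning further on $\Zz$, $N^k$ is a Poisson process of intensity $u\mapsto\theta_S\big(u,\vec Z_S^k(u)\big)$, and for $h$ small enough $[t-h,t]\subset I_{M_t}$, so that $\vec Z_S^k(u)=\vec Z_S^k(t)$ and $\theta_S\big(u,\vec Z_S^k(u)\big)=\theta_{S,M_t}\big(u,\vec Z_S^k(t)\big)$ on that window. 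A first-moment computation then gives $\EE_\Ss[\xi_k]=\int \Hh^{\otimes dM_t}(u)\,g_h\big(\vec z_S(t)-\eta u\big)\,f_{M_t}\big(\vec z_S(t)-\eta u\big)\,\dd u$, where $g_h(y)=\int_0^1K(v)\,\theta_{S,M_t}(t-hv,y)\,\dd v$; since $\theta_{S,M_t}$ is continuous ({\bf\ref{hyp:continuity}}) and $\int_0^1K=1$ ({\bf\ref{hyp:K}}), $g_h\to\theta_{S,M_t}(t,\cdot)$ locally uniformly and so $\EE_\Ss[\xi_k]\to f_{M_t}\big(\vec z_S(t)\big)\,\theta_S\big(t,\vec z_S(t)\big)$. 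For the variance I would use the Poisson second-moment identity $\EE\big[(\int\varphi\,\dd N)^2\big]=\int\varphi^2\theta+(\int\varphi\theta)^2$, which together with {\bf\ref{hyp:norm}}, {\bf\ref{hyp:H}} and {\bf\ref{hyp:K}} yields $\EE_\Ss[\xi_k^2]\le C_{M_t}/(h\eta^{dM_t})$ and hence $\var_\Ss\big(\hat\phi_{S,h,\eta}\big)\le C_{M_t}/(nh\eta^{dM_t})\to0$ a.s. Thus $\hat\phi_{S,h,\eta}\big(t,\vec z_S(t)\big)\toPs f_{M_t}\big(\vec z_S(t)\big)\,\theta_S\big(t,\vec z_S(t)\big)$, and dividing by the denominator (a Slutsky-type step, valid since $f_{M_t}\big(\vec z_S(t)\big)>0$) gives $\tilde\theta_{S,h,\eta}\big(t,\vec z_S(t)\big)\toPs\theta_S\big(t,\vec z_S(t)\big)$ for $\PP$-a.e.\ $\Ss$. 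Since for each $\epsilon>0$ one has $\PP\big(\big|\tilde\theta_{S,h,\eta}\big(t,\vec z_S(t)\big)-\theta_S\big(t,\vec z_S(t)\big)\big|>\epsilon\big)=\EE\big[\PP_\Ss\big(\big|\tilde\theta_{S,h,\eta}\big(t,\vec z_S(t)\big)-\theta_S\big(t,\vec z_S(t)\big)\big|>\epsilon\big)\big]$ with integrand in $[0,1]$, bounded convergence finally yields $\tilde\theta_{S,h,\eta}\big(t,\vec z_S(t)\big)\toP\theta_S\big(t,\vec z_S(t)\big)$.

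The step I expect to be the main obstacle is the numerator analysis: one has to justify carefully that, $\PP$-a.s.\ and for all large $n$, the time-smoothing window $[t-h,t]$ falls inside $I_{M_t}$ (which freezes the co-process at $\vec Z_S^k(t)$ there), and then to combine the vanishing of the time-smoothing bias (carried by $g_h$ and $h\to0$) with that of the space-smoothing bias (carried by $\eta\to0$), all the while keeping every constant $C_{M_t}$ under control as a finite $\Ss$-measurable function of the random dimension $dM_t$. The denominator estimate and the conditional-to-unconditional passage are routine by comparison; note that {\bf\ref{hyp:differentiability}} and {\bf\ref{hyp:speedM}} are not needed here, precisely because no convergence rate is claimed.
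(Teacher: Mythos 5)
Your proposal is correct and follows essentially the same route as the paper: condition on $\Ss$, control $\hat\phi_{S,h,\eta}$ and $\tilde f_{S,\eta}$ through their conditional first and second moments (the content of Lemma~\ref{lemma:phi} and of the standard kernel-density bounds the paper cites from Bosq), deduce conditional convergence of the ratio, and pass to the unconditional statement by dominated convergence. The only differences are presentational: you make the bias/variance computations explicit and invoke Slutsky, whereas the paper wraps the ratio step in the event decomposition $\PP_\Ss(\Aa)\le\PP_\Ss(\Aa\cap\Bb)+\PP_\Ss(\bar\Bb)$ with an $\LL^2$ bound on $\tilde f_{S,\eta}-f$.
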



Theorem~\ref{theo:CvDistr} shows the asymptotic normality of our estimator.

\begin{theorem}\label{theo:CvDistr}
Assume that {\bf \ref{hyp:existence}}  to {\bf\ref{hyp:speedM}} are satisfied. Let $h$ and $\eta$ be two $\Ss$-measurable bandwidths such that $h\to0$, $\eta\to 0$, $nh\eta^{dM_t}\to+\infty$, $nh\eta^{dM_t+4}\to0$, and $nh^5\eta^{dM_t}\to0$ {\normalfont a.s.} as $n\to+\infty$ then for any $z:[0,1]\to\RR$ such that $\theta_S\left(t,\vec z_S(t)\right)\neq 0$

\begin{align*}
\left(nh\eta^{dM_t}\right)^{1/2}\frac{\tilde\theta_{S,h,\eta} \big(t,\vec z_S(t)\big)-\theta_S \big(t,\vec z_S(t)\big)}{\left[\tilde \theta_{S,h,\eta} \big(t,\vec z_S(t)\big) \norm{K}_2^2 \norm{\Hh}_2^{2dM_t} / \tilde f_{S,\eta} \big(\vec z_S(t)\big)\right]^{1/2}}\toDd\Nn(0,1).
\end{align*}

\end{theorem}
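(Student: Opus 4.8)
\emph{Proof sketch.} The plan is to condition on $\Ss$ — which freezes the dimension $dM_t$ and reduces the statement to a classical pointwise central limit theorem for a Nadaraya--Watson type estimator in that (now deterministic) dimension — and then to transfer the conclusion to the unconditional level. Write $\theta:=\theta_S(t,\vec z_S(t))$ and $f:=f_{M_t}(\vec z_S(t))$, which are $\Ss$-measurable, with $f>0$ by \ref{hyp:norm} and $\theta>0$ by hypothesis. Conditionally on $\Ss$ the pairs $(N^k,Z^k)$ are i.i.d., and since $h\to0$, $\supp K=[0,1]$ and $\supp\Hh=[-1,1]$, $\PP$-a.s. for $n$ large the function $K_h(t-\cdot)$ is supported in $[t-h,t]\subset I_{M_t}$, so on that interval $\vec Z^k_S(\cdot)=\vec Z^k_S(t)$ and $\theta_S=\theta_{S,M_t}$. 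By the proof of Proposition~\ref{prop:CvProba}, $\hat f_{S,\eta}(\vec z_S(t))\toPs f>0$, hence $\PP_\Ss(\hat f_{S,\eta}(\vec z_S(t))<a_n)\to0$ (as $a_n\to0$) and, up to an $o_{\PP_\Ss}(1)$ term, $\tilde f_{S,\eta}$ may be replaced by $\hat f_{S,\eta}$. Putting
\[
\xi_k:=\Bigl(\int_0^t K_h(t-u)\,\dd N^k_u-\theta\Bigr)H_\eta\bigl(\vec z_S(t)-\vec Z^k_S(t)\bigr),
\]
we have $\hat\phi_{S,h,\eta}(t,\vec z_S(t))-\theta\,\hat f_{S,\eta}(\vec z_S(t))=\tfrac1n\sum_{k=1}^n\xi_k$, hence on the above event $\tilde\theta_{S,h,\eta}(t,\vec z_S(t))-\theta=\hat f_{S,\eta}(\vec z_S(t))^{-1}\tfrac1n\sum_{k=1}^n\xi_k$. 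By Proposition~\ref{prop:CvProba} and its proof, $\tilde\theta_{S,h,\eta}$, $\tilde f_{S,\eta}$, $\hat f_{S,\eta}$ converge (conditionally) in probability to $\theta$, $f$, $f$; so, by Slutsky's lemma, it suffices to prove that, conditionally on $\Ss$ and $\PP$-a.s.,
\[
\bigl(nh\eta^{dM_t}\bigr)^{1/2}\,\frac1n\sum_{k=1}^n\xi_k\ \toDd\ \Nn\bigl(0,\ \norm{K}_2^2\,\norm{\Hh}_2^{2dM_t}\,\theta f\bigr).
\]

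\emph{Bias.} Conditionally on $\Ss$ and on $\Zz$, $N^k$ is a Poisson process with intensity $u\mapsto\theta_{S,M_t}(u,\vec Z^k_S(t))$ on $I_{M_t}$, so $\EE_{\Ss,\Zz}[\hat\phi_{S,h,\eta}]=\tfrac1n\sum_k\bigl(\int_0^1 K(v)\theta_{S,M_t}(t-hv,\vec Z^k_S(t))\,\dd v\bigr)H_\eta(\vec z_S(t)-\vec Z^k_S(t))$. Taking $\EE_\Ss$ (which turns the empirical average into an integral against $f_{M_t}$), performing second-order Taylor expansions in the time direction (for $f_{M_t}\theta_{S,M_t}$, whose second $u$-derivative is the $(dM_t+1)$-st one bounded in \ref{hyp:differentiability}) and in the space directions (for $f_{M_t}\theta_{S,M_t}$ and for $f_{M_t}$, again using \ref{hyp:differentiability}), and using that $K$ and $\Hh$ are of order $2$ (\ref{hyp:K}, \ref{hyp:H}), gives $\EE_\Ss[\hat\phi_{S,h,\eta}]=\theta f+O((h^2+\eta^2)Q_{M_t})$ and $\EE_\Ss[\hat f_{S,\eta}]=f+O(\eta^2 Q_{M_t})$, so $\EE_\Ss[\xi_1]=O((h^2+\eta^2)Q_{M_t})$. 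As $Q_{M_t}$ is $\Ss$-measurable, $(nh\eta^{dM_t})^{1/2}\EE_\Ss[\xi_1]=O\bigl((nh^5\eta^{dM_t})^{1/2}+(nh\eta^{dM_t+4})^{1/2}\bigr)\to0$ $\PP$-a.s. by the bandwidth conditions.

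\emph{Variance.} Since the $\xi_k$ are i.i.d. given $\Ss$ and $(\EE_\Ss\xi_1)^2=O((h^2+\eta^2)^2)$ is negligible, $\var_\Ss(\tfrac1n\sum_k\xi_k)=\tfrac1n\EE_\Ss[\xi_1^2](1+o(1))$. Conditioning again on $\Zz$, the Poisson variance is $\int_0^t K_h(t-u)^2\theta_{S,M_t}(u,\vec Z^1_S(t))\,\dd u=h^{-1}\norm{K}_2^2\,\theta_{S,M_t}(t,\vec Z^1_S(t))(1+o(1))$, the corresponding squared bias being $O(h^4)$, and on $\supp H_\eta(\vec z_S(t)-\cdot)$ one has $\theta_{S,M_t}(t,\vec Z^1_S(t))=\theta+O(\eta)$ by \ref{hyp:continuity}. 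Integrating against $f_{M_t}$ and using $\int (H_\eta(\vec z_S(t)-y))^2 g(y)\,\dd y=\eta^{-dM_t}\norm{\Hh}_2^{2dM_t}g(\vec z_S(t))(1+o(1))$ for continuous $g$ (dominated convergence, \ref{hyp:H} and $\norm{\Hh}_\infty<\infty$), one gets $\EE_\Ss[\xi_1^2]=h^{-1}\eta^{-dM_t}\norm{K}_2^2\norm{\Hh}_2^{2dM_t}\theta f(1+o(1))$, hence $nh\eta^{dM_t}\var_\Ss(\tfrac1n\sum_k\xi_k)\to\norm{K}_2^2\norm{\Hh}_2^{2dM_t}\theta f$ $\PP$-a.s.

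\emph{Central limit theorem and conclusion.} For the i.i.d. triangular array $\{\xi_k\}_{k\le n}$ I verify Lyapunov's condition with fourth moments. On $\supp H_\eta(\vec z_S(t)-\cdot)$ one has $\abs{H_\eta}\lesssim\eta^{-dM_t}$, and $\PP_\Ss(H_\eta(\vec z_S(t)-\vec Z^1_S(t))\ne0)=O(\eta^{dM_t})$ by \ref{hyp:norm}; conditionally on $(\Ss,\Zz)$ the Poisson integral $\int_0^tK_h(t-u)\,\dd N^1_u$ has $p$-th cumulant $\int_0^tK_h(t-u)^p\theta_{S,M_t}(u,\vec Z^1_S(t))\,\dd u\asymp h^{1-p}$ for $p=1,\dots,4$ (finite by \ref{hyp:K}), so its centred fourth moment is $\asymp h^{-3}$ and $\EE_{\Ss,\Zz}\bigl|\int_0^t K_h(t-u)\,\dd N^1_u-\theta\bigr|^4\lesssim h^{-3}$. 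Hence $\EE_\Ss[\xi_1^4]\lesssim\eta^{-4dM_t}\cdot\eta^{dM_t}\cdot h^{-3}=(h\eta^{dM_t})^{-3}$, so Lyapunov's ratio $n\EE_\Ss[\xi_1^4]/(n\var_\Ss\xi_1)^2\asymp(nh\eta^{dM_t})^{-1}\to0$ $\PP$-a.s. The Lindeberg--Feller theorem then yields, conditionally on $\Ss$,
\[
\bigl(\var_\Ss(\tfrac1n\sum_{k=1}^n\xi_k)\bigr)^{-1/2}\bigl(\tfrac1n\sum_{k=1}^n\xi_k-\EE_\Ss\xi_1\bigr)\ \toDd\ \Nn(0,1)\qquad\PP-a.s.;
\]
multiplying by $(nh\eta^{dM_t}\var_\Ss(\tfrac1n\sum_k\xi_k))^{1/2}\to(\norm{K}_2^2\norm{\Hh}_2^{2dM_t}\theta f)^{1/2}$, adding back the negligible rescaled mean, dividing by $\hat f_{S,\eta}(\vec z_S(t))\toPs f$, rewriting in the self-normalised form of the theorem via $\tilde\theta_{S,h,\eta}\toPs\theta$ and $\tilde f_{S,\eta}\toPs f$, and finally transferring from conditional to unconditional convergence (the self-normalised statistic has the $\Ss$-free limit $\Nn(0,1)$, so one takes $\EE$ of its bounded conditional characteristic function and applies dominated convergence) completes the proof. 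The main obstacle is the moment control of the Poisson integrals needed for Lyapunov's condition: one has to exploit that near $t$ the intensity of $N^k$ is of order $h$, so that all moments of the local count are small enough for $nh\eta^{dM_t}\to\infty$ to close the argument; the rest is careful but standard bookkeeping of the two nested conditionings — on $\Ss$ to fix the dimension, on $\Zz$ to use the Poisson structure.
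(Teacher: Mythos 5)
Your proof is correct, but it is organized around a different decomposition than the paper's. The paper writes the self-normalised statistic as $B_n-A_n$, where $A_n$ carries the fluctuation of $\tilde f_{S,\eta}$ and is shown to vanish at the $(nh\eta^{dM_t})^{1/2}$ scale (the extra factor $h$ relative to the density-estimation rate, together with $nh\eta^{dM_t+4}\to0$ and the choice of $a_n$, kills it), while $B_n$ carries the CLT, delegated to Lemma~\ref{lemma:phiDistr} (Lyapunov with $\delta=2$ via Burkholder--Davis--Gundy) plus the bias control of Lemma~\ref{lemma:WandJones}; Slutsky and dominated convergence of the conditional characteristic function finish the argument. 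You instead linearise through $\hat\phi_{S,h,\eta}-\theta\,\hat f_{S,\eta}=\frac1n\sum_k\xi_k$, so the density fluctuation is absorbed into a single i.i.d.\ sum whose conditional variance you compute directly (the $\theta\hat f$ contribution being of smaller order $h$), and you verify Lyapunov's condition with fourth moments via explicit Poisson cumulants of $\int_0^tK_h(t-u)\,\dd N^1_u$ (using $K\in\LL^4$ from \ref{hyp:K}) rather than BDG; the replacement of $\tilde f$ by $\hat f$ on the event $\{\hat f_{S,\eta}\ge a_n\}$, whose conditional probability tends to one, is legitimate for a distributional limit. What the paper's route buys is modularity (the $A_n$ bound and Lemmas~\ref{lemma:phi}--\ref{lemma:phiDistr} are reused elsewhere) and an explicit display of why the denominator contributes nothing at this rate; what your route buys is a single, self-contained CLT with the covariance between numerator and denominator handled automatically. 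Two harmless imprecisions: continuity (\ref{hyp:continuity}) gives $\theta_{S,M_t}(t,\vec Z^1_S(t))=\theta+o(1)$ on the support of $H_\eta(\vec z_S(t)-\cdot)$, not $O(\eta)$, and the "squared bias" term inside the variance is only bounded, not $O(h^4)$, before the spatial localisation by $H_\eta$ --- but in both places an $o(1)$ or $O(1)$ bound is all your argument actually needs, so the conclusion stands.
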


\emph{Remarks.}
Corollary~\ref{cor:MSE} gives the tools to define optimal bandwidths in terms of pointwise asymptotic mean squared error. Assumptions must however be made on the process $M$ to conclude on the convergence rate since the ${\rm MSE}$ depends on the quantity $\EE \left(n^{-\frac{4}{5+dM_t}}\right)$. In what follows we assume that $M$ is a renewal process with inter-arrival times distributed accordingly to a strictly increasing cumulative distribution function $F$. The behaviour of the considered expectation is linked to the local behaviour of $F$ around $0$. The two following examples give incentive on the performances of our estimator for $F$ close to $0$ around $0$.
\begin{example}\label{example:1}
Let $\varepsilon$ be a positive constant and assume that $F(x)=0$ for any $x\in[0,\varepsilon]$. Then, 
\begin{align*}
\EE \left(n^{-\frac{4}{5+dM_t}}\right)&\leq n^{-\frac{4}{5}}\PP(S_0\leq t)+\sum_{k\geq 1}\left[n^{-\frac{4}{5+dk}}-n^{-\frac{4}{5+d(k-1)}}\right]\PP(S_k\leq t)\\
&\leq n^{-\frac{4}{5}}+\sum_{k= 1}^{\lfloor\frac{t}{\varepsilon}\rfloor}\left[n^{-\frac{4}{5+dk}}-n^{-\frac{4}{5+d(k-1)}}\right]\\
&\leq n^{-\frac{4}{5+d\lfloor\frac{t}{\varepsilon}\rfloor}}.
\end{align*}
So that \eqref{cor:MSE} gives
\begin{align*}
{\rm MSE}(t,z)=\underset{n\to+\infty}{O}\left(n^{-\frac{4}{5+d\left\lfloor\frac{t}{\varepsilon}\right\rfloor}}\right).
\end{align*}
This is the optimal rate of convergence for the nonparametric regression with a twice continuously differentiable regression function from $\RR^{d\left\lfloor\frac{t}{\varepsilon}\right\rfloor+1}$ to $\RR$~\citep[see][]{Gyorfi2002}.
\end{example}

\begin{example}
Let $\varepsilon$ and $\alpha$ be two positive constants such that $\alpha>1$ and assume that
$F(x)\leq\exp\left\{-(\varepsilon x^{-1})^\alpha\right\},$ around $0$. Then, 
\begin{align*}
\EE \left(a^{M_t}\right)&\leq n^{-\frac{4}{5}}\PP(S_0\leq t)+\sum_{k\geq 1}\left[n^{-\frac{4}{5+dk}}-n^{-\frac{4}{5+d(k-1)}}\right]\PP(S_k\leq t)\\
&\leq n^{-\frac{4}{5+dk^*}}+\sum_{k\geq k^*+1}kF\left(\frac{\varepsilon}{(3\log k)^{1/\alpha}}\right)\\
&\leq n^{-\frac{4}{5+dk^*}},
\end{align*}
where $k^*=\left(\frac{t}{\varepsilon}\right)^{\frac{\alpha}{\alpha-1}}3^{\frac{1}{\alpha-1}}$. So that \eqref{cor:MSE} gives
\begin{align*}
{\rm MSE}(t,z)=\underset{n\to+\infty}{O}\left(n^{-\frac{4}{5+d\left(\frac{t}{\varepsilon}\right)^{\frac{\alpha}{\alpha-1}}3^{\frac{1}{\alpha-1}}}}\right).
\end{align*}

Remark that if we formally take $\alpha=+\infty$, we get back to the situation of Example~\ref{example:1} and the upper bounds coincide as the previous upper bound writes
\begin{align*}
{\rm MSE}(t,z)=\underset{n\to+\infty}O\left(n^{-\frac{4}{5+d\frac{t}{\varepsilon}}}\right).
\end{align*}
Note that the rate of convergence of the mean squared error is in-between the traditional finite-dimensional rate \citep[see][]{Gyorfi2002}, and the rates obtained by Biau et al \cite{Biau2010} in the infinite-dimensional setting. This is explained by the particularity of our model which is itself in-between the finite and infinite-dimensional settings.
\end{example}

\section{Simulation study}\label{seq:Simu}

In this section we aim  at studying the performances of our estimator from a practical point of view. To this end we study our estimator over $n_{MC}$ replications of Monte Carlo simulations. The squared error, its mean (MSE) defined in \eqref{eq:MSE}, median, first and third empirical quartiles as well as the normalized root mean squared error (NRMSE) defined as follows
\begin{align}\label{eq:NRMSE}
\mathrm{NRMSE}\left(t,\vec Z_S(t)\right):=\frac{\sqrt{\mathrm{MSE}\left(t,\vec Z_S(t)\right)}}{\theta_S\left(t,\vec Z_S(t)\right)},
\end{align}
are used as indicators of the performances of our estimator and are calculated over a grid of $n_t$ times $t$ in $[0,1]$.

\subsection{Experimental design}

The Monte Carlo replications are simulated according to the model~\eqref{eq:model} presented in the introduction where $\Ss$ and $\theta_S$ are chosen as follows.
On the one hand the inter-arrival times of the counting process $M$ have the same distribution function than $U+\varepsilon$ with $U\sim\Ee(1/\varepsilon)$ for $\varepsilon>0$, putting ourselves in the situation of Example~\ref{example:1}. On the other hand
\begin{align}\label{eq:simulations}
\theta_S\left(t,\vec Z_S(t)\right) = \lambda_0(t)\exp\left(\sin\left(\beta\cdot\vec Z_S(t)\right)\right),
\end{align}
where $\lambda_0(t)=\frac{b}{a}\left(\frac{t}{a}\right)^{b-1}$ with $a,b>0$, $\beta$ is a vector of same dimension as $\vec Z_S(t)$ and $u\cdot v$ denotes the euclidian inner product of vectors $u$ and $v$. The co-process $Z$ is simulated according to a Brownian motion.

Note that the intensity $\theta$ is a modified version of the proportional hazards intensity function that models the dependence of our counting process on the past of the co-process~$Z$. For $\beta>0$, $\theta_S\left(t,\vec Z_S(t)\right)$ can be viewed as a stochastic perturbation of the intensity of a Weibull process as presented in Figure~\ref{fig:Regularity}.
Remark that for $\beta\ge 0.3$ the replications look too irregular for a kernel estimator to capture their behaviour properly. It is due to the increasing impact of the covariates which we chose to be Brownian.

As $(N_t)_{t\in[0,1]}$ is an inhomogeneous Poisson process conditionally on $\Zz$ and $\Ss$, we can simulate its jumping times by applying the inverse function of $\Lambda(\cdot)=\int_0^\cdot \theta_S\left(s,\vec Z_S(s)\right)\dd s$ to the jumping times of a homogeneous Poisson process with intensity $1$. In our case this inverse function writes
\begin{align*}
\Lambda^{-1}(u)=\Lambda_0^{-1}\left(\Lambda_0\left(S_{j_u}\right)+
\frac{u-\Lambda\left(S_{j_u}\right)}{\exp\left(\sin\left(\beta\cdot\vec Z_S(t)\right)\right)}\right),
\end{align*}
where $j_u$ is such that $\Lambda\left(S_{j_u}\right)\leq u<\Lambda\left(S_{j_u+1}\right)$ and $\Lambda_0(\cdot)=\int_0^\cdot \lambda_0(s)\dd s$. This allows us to simulate the data according to our model.

We finally take
\begin{align*}
\Hh(u) &=\left(\frac{1}{2}-\frac{5}{8}(3u^2-1)\right)\1_{\abs{u}\leq1},\\
K(u) &=(30u^2-36u+9)\1_{0\leq u\leq1},
\end{align*}
so that assumptions \ref{hyp:H} and \ref{hyp:K} are fulfilled.

\begin{figure}[ht]
\begin{center}
\includegraphics[scale=0.88]{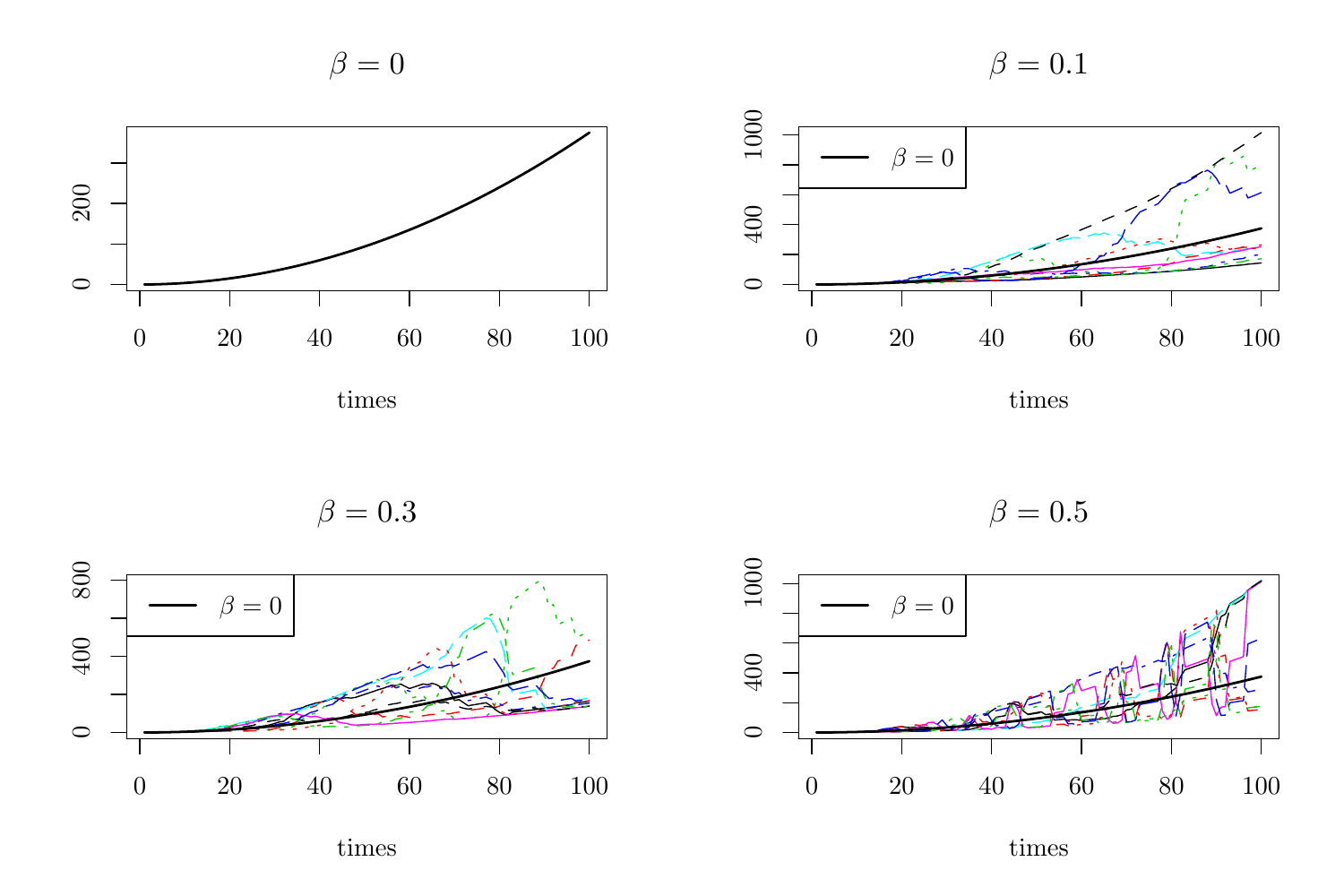}
\end{center}
\caption{\label{fig:Regularity} 10 replications of the studied intensity~\eqref{eq:simulations} for $\beta=0$ (top left), $\beta=0.1$ (top right), $\beta=0.3$ (bottom left) and $\beta=0.5$ (bottom right). The bold line represents the corresponding intensity for a Weibull process ($\beta=0$).}
\end{figure}

\subsection{Results}

Figure~\ref{fig:Quantiles} represents the theoretical intensity (solid) versus the first and third empirical quartiles (dashed and dotted) of $100$ Monte Carlo replications of our estimator for $n=500$ and $\beta=0$ (top left), $\beta=0.1$ (top right), $\beta=0.3$ (bottom left) and $\beta=0.5$ (bottom right). The case $\beta=0$ is the one of the estimation of the intensity of a Weibull process. As $\beta$ increases, the counting process $N$ deviates from this simple case to a point where the signal is almost chaotic due to the influence of the co-process $Z$ for $\beta=0.5$ (see Figure~\ref{fig:Regularity}). As expected our estimator is less accurate for high values of $t$ (\emph{i.e.} high dimensionality) and quickly varying objective function (\emph{e.g.} $\beta=0.5$). We also note an artifact for the estimation around zero. It is a well known issue with kernel estimation on the edges of the support of the objective function.

In the following we fix $\beta=0.1$.
\begin{figure}[ht]
\begin{center}
\includegraphics[scale=0.88]{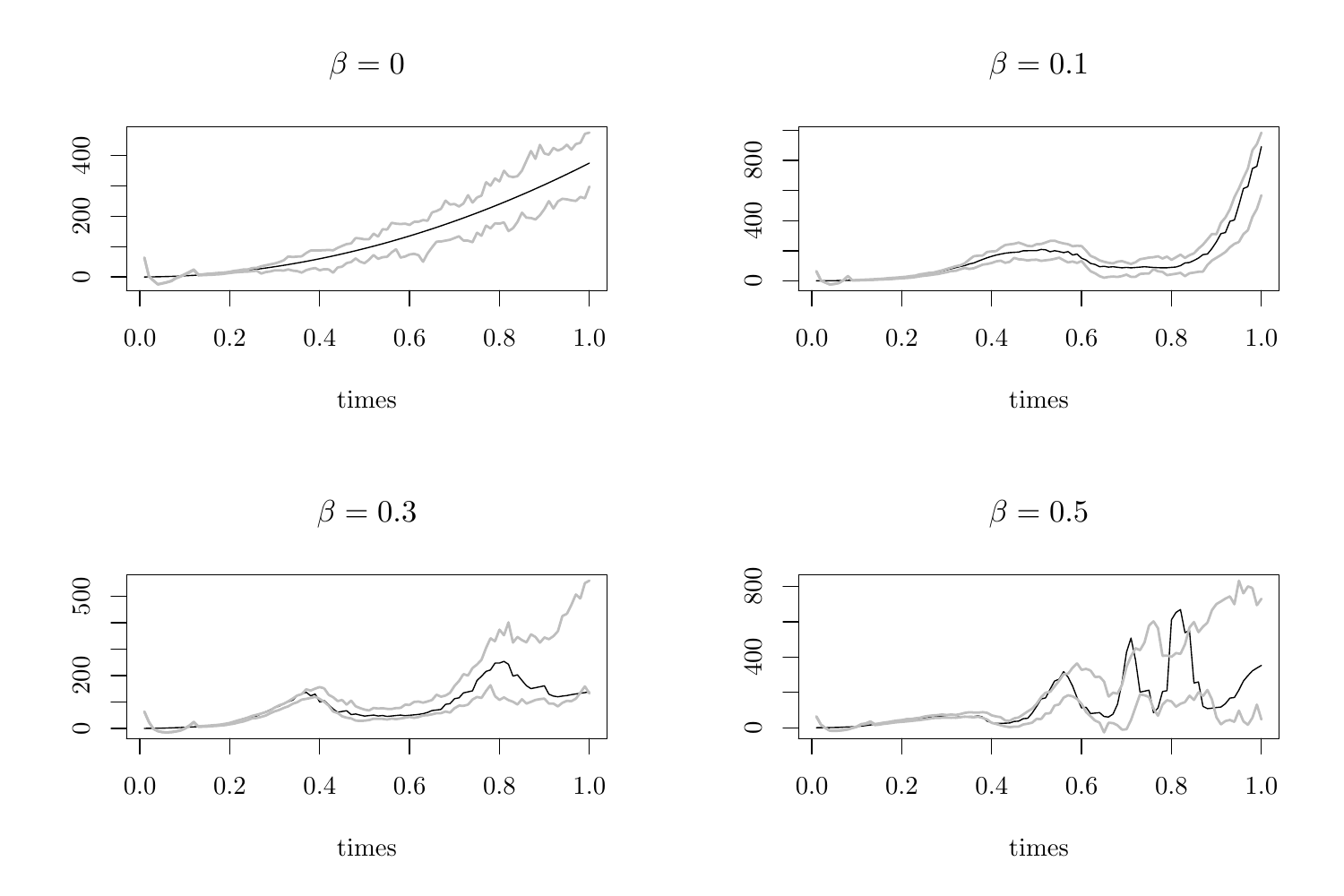}
\end{center}
\caption{\label{fig:Quantiles} Objective intensity (black line) versus first and third (gray lines) empirical quartiles for $500$ observations, $n_{MC}=100$, $n_t=100$ and $\beta=0$ (top left), $\beta=0.1$ (top right), $\beta=0.3$ (bottom left) and $\beta=0.5$ (bottom right).}
\end{figure}

Figure~\ref{fig:QuartSE} represents the median (solid line) and the first and third empirical quartiles (dashed and dotted lines) of the squared error of our estimator for $10,000$ Monte Carlo replications of our estimator for $n=500$ (Figure~\ref{subfig:n500QuartSE} and $n=10,000$ (Figure~\ref{subfig:n10000QuartSE}). As expected, the results are far better for $n=10,000$ where the third quartile does not exceed $14,000$ compared to a maximum of $50,000$ for $n=500$. Remark that these maxima are always attained near to $t=1$. That is explained by the fact that as $t$ increases, the dimension of the estimation problem increases.
\begin{figure}
\begin{subfigure}{\linewidth}
\begin{center}
\includegraphics[scale=0.8]{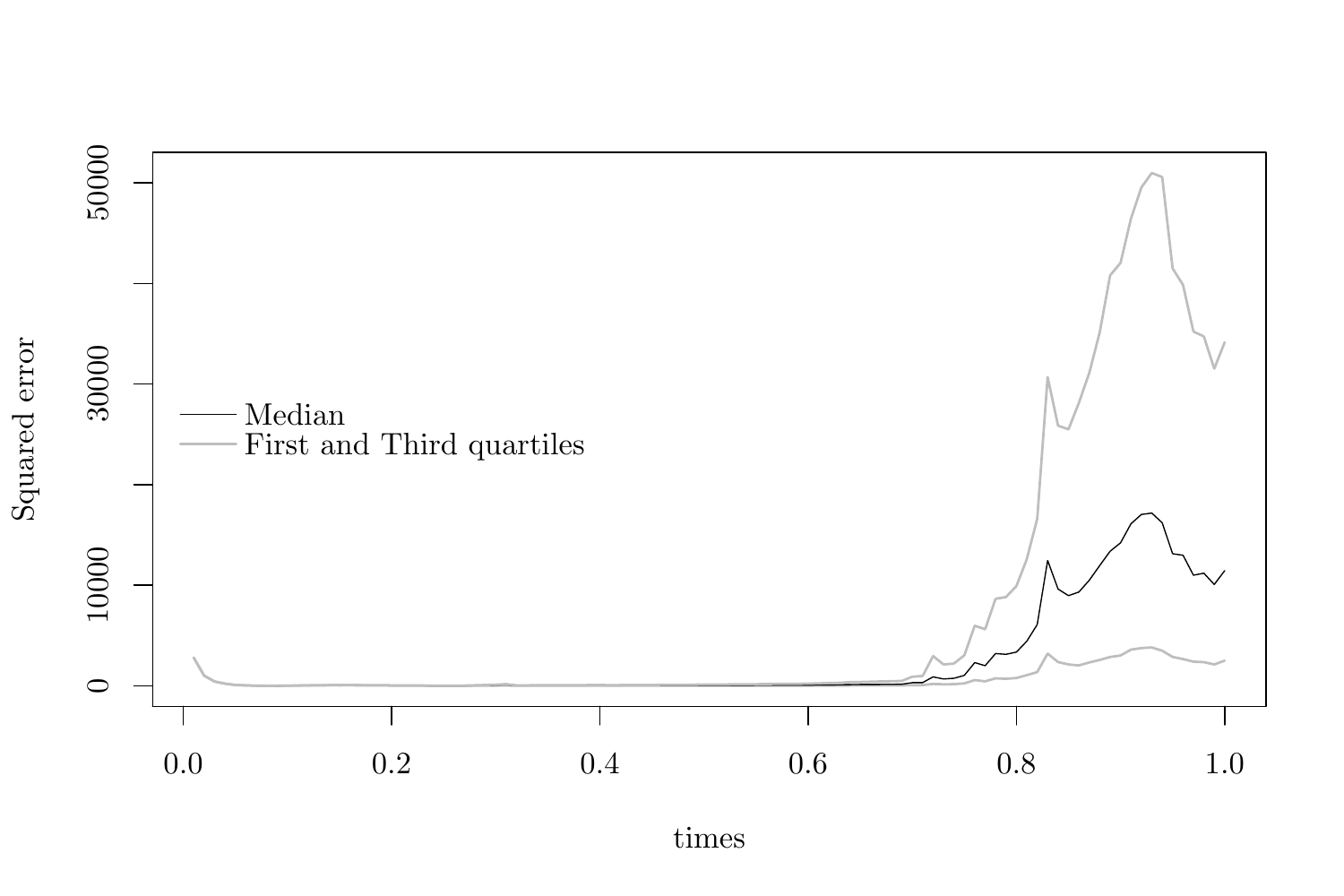}
\caption{$n=500$}
\label{subfig:n500QuartSE}
\end{center}
\end{subfigure}
\begin{subfigure}{\linewidth}
\begin{center}
\includegraphics[scale=0.8]{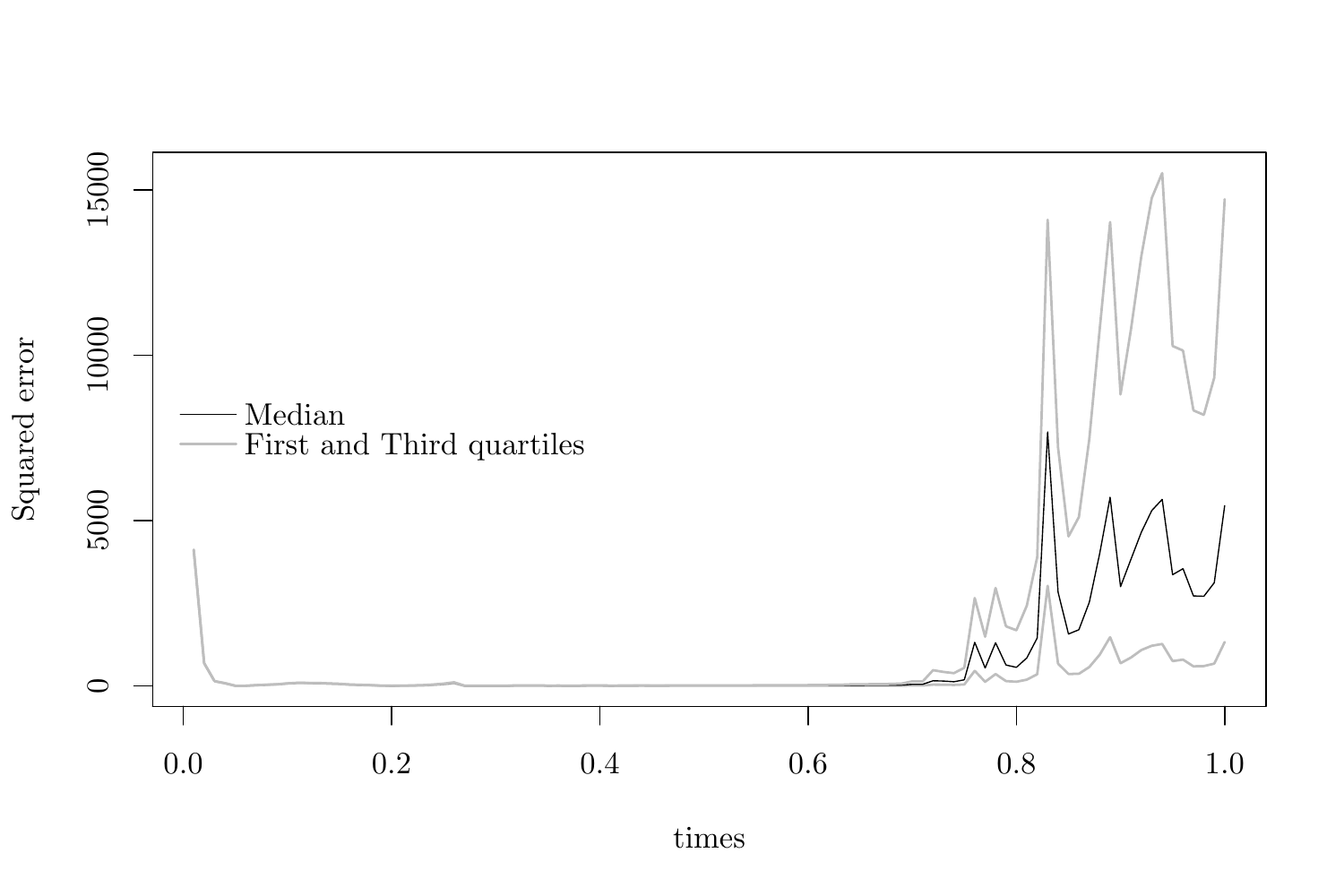}
\caption{$n=10,000$}
\label{subfig:n10000QuartSE}
\end{center}
\end{subfigure}
\caption{\label{fig:QuartSE} Median, first and third empirical quartiles of the squared error of our estimator for $n_{MC}=10,000$, $n_t=100$, $\beta=0.1$ and (a)~$n=500$ and (b)~$n=10,000$.}
\end{figure}

In Table~\ref{table:SE} the results are obtained for $10,000$ Monte Carlo replications of our model for $3$ different times and $5$ different values of $n$. The dimensionality of our estimation problem increases quickly towards $59$ at time $t=0.9$. This shows the difficulty of the estimation for small $n$. We observe nevertheless an increase in performance for bigger values of $n$. For $n=10,000$ the $\mathrm{NRMSE}$ indicator stays below $0.2$ after time $t=0.3$. At this point we seem to have attained the asymptotic property for the MSE described in Section~\ref{subsec:Results}. 

\begin{table}
\begin{center}
\begin{tabular}{c|ccc|c|c|c}
  \hline
$n$ & $t$ & $M_t$ & $\lambda(t)$ & Estimate & MSE & NRMSE \\ 
  \hline
 & 0.5 &  33 & 36.21 & 38.15 & 4.7E+03 & 1.89 \\ 
100 & 0.7 &  47 & 110.17 & 97.50 & 1.2E+05 & 3.14 \\ 
 & 0.9 &  59 & 763.96 & 675.75 & 6.5E+06 & 3.34 \\ 
  \hline
 & 0.5 &  33 & 36.21 & 38.26 & 3.4E+02 & 0.51 \\ 
250 & 0.7 &  47 & 110.17 & 92.28 & 1.3E+03 & 0.33 \\ 
 & 0.9 &  59 & 763.96 & 717.24 & 3E+06 & 2.27 \\ 
  \hline
 & 0.5 &  33 & 36.21 & 38.37 &  96 & 0.27 \\ 
500 & 0.7 &  47 & 110.17 & 91.51 & 9.4E+02 & 0.28 \\ 
 & 0.9 &  59 & 763.96 & 649.71 & 4.8E+07 & 9.06 \\ 
  \hline
 & 0.5 &  33 & 36.21 & 38.03 &  54 & 0.20 \\ 
1,000 & 0.7 &  47 & 110.17 & 92.05 & 7.1E+02 & 0.24 \\ 
 & 0.9 &  59 & 763.96 & 732.71 & 7.2E+04 & 0.35 \\ 
  \hline
 & 0.5 &  33 & 36.21 & 37.84 &  11 & 0.09 \\ 
10,000 & 0.7 &  47 & 110.17 & 92.39 & 3.8E+02 & 0.18 \\ 
 & 0.9 &  59 & 763.96 & 743.70 & 2.9E+03 & 0.07 \\ 
\end{tabular}
\end{center}
\caption{Mean value of the estimator, mean squared error~\eqref{eq:MSE} and normalized root mean squared error~\eqref{eq:NRMSE} for $n_{MC}=10,000$ and $\beta=0.1$.}\label{table:SE}
\end{table}

\section{Application to real data}\label{seq:RealData}

We study a data set constituted of historical prices of $n=495$ companies as well as the crude oil prices over a period of roughly one year and two months (from 17th April, 2014 to 23rd June, 2015). The companies data are taken from the website Yahoo Finance so that every company considered composed the S\&P500 index on the 23rd June, 2015. The crude oil prices are taken from the website Investing.com.
The Cox process data consist of the count of the number of times when the percent returns of said companies go below a certain threshold with the counting rate depending on the stochastic dynamics of the company market capitalization. In our case the company market capitalization is represented by the action's trade volume normalized increments and is observed when the percent return of the crude oil action below another threshold. By analyzing this count, we aim to learn the financial properties of this $495$ companies system.

\begin{table}
\centering
\begin{tabular}{ccccccccc}
\hline
Date & Open & High & Low & Close & Volume & Adj.Close \\ 
  \hline
2015-06-23 & 39.89 & 39.95 & 39.42 & 39.60 & 2053600 & 39.60 \\ 
  2015-06-22 & 39.81 & 40.01 & 39.73 & 39.81 & 3901700 & 39.81 \\ 
  2015-06-19 & 39.80 & 39.94 & 39.49 & 39.49 & 2581000 & 39.49 \\ 
  2015-06-18 & 39.80 & 40.06 & 39.72 & 39.90 & 1865000 & 39.90 \\ 
  2015-06-17 & 39.76 & 39.80 & 39.32 & 39.60 & 1519400 & 39.60 \\ 
  2015-06-16 & 39.59 & 39.81 & 39.38 & 39.79 & 1422600 & 39.79 \\ 
  2015-06-15 & 39.63 & 39.63 & 39.25 & 39.52 & 2320100 & 39.52 \\ 
  2015-06-12 & 40.33 & 40.49 & 39.74 & 39.84 & 2764200 & 39.84 \\ 
  2015-06-11 & 40.57 & 40.60 & 40.29 & 40.53 & 1566000 & 40.53 \\ 
  2015-06-10 & 40.40 & 40.59 & 40.27 & 40.52 & 1787900 & 40.52 \\ 
  \multicolumn{7}{c}{$\ldots$}\\
\end{tabular}
\caption{{\label{table:RawData}}First $10$ rows of raw data for Agilent Technologies Inc. taken from Yahoo Finance.}
\end{table}

\begin{table}
\centering
\begin{tabular}{cccccccccc}
\hline
Date & Price & Open & High & Low & Vol. & Change \\ 
  \hline
2015-06-23 & 61.01 & 60.21 & 61.49 & 59.55 & 336.22K & 1.04\% \\ 
  2015-06-22 & 60.38 & 59.75 & 60.63 & 59.27 & 255.31K & 1.29\% \\ 
  2015-06-19 & 59.97 & 60.88 & 60.93 & 59.24 & 299.89K & -1.40\% \\ 
  2015-06-18 & 60.82 & 60.10 & 61.33 & 59.67 & 171.48K & 0.81\% \\ 
  2015-06-17 & 60.33 & 60.52 & 61.81 & 59.34 & 232.09K & -0.20\% \\ 
  2015-06-16 & 60.45 & 60.01 & 60.81 & 59.88 & 129.30K & 0.75\% \\ 
  2015-06-15 & 60.00 & 60.33 & 60.42 & 59.19 & 128.26K & -0.66\% \\ 
  2015-06-12 & 60.40 & 60.92 & 61.06 & 60.18 & 91.96K & -1.34\% \\ 
  2015-06-11 & 61.22 & 61.56 & 61.91 & 60.65 & 150.62K & -0.97\% \\ 
  2015-06-10 & 61.82 & 61.00 & 62.22 & 60.88 & 188.78K & 2.00\% \\ 
  \multicolumn{7}{c}{$\ldots$}\\
\end{tabular}
\caption{{\label{table:RawOil}}First $10$ rows of raw data for the crude oil action taken from Investing.com.}
\end{table}


Tables~\ref{table:RawData} and \ref{table:RawOil} present the layout of the raw data directly taken from the websites Yahoo Finance and Investing.com.
We denote $\left((Y^1_t)_{t\in[0,1]},\ldots,(Y^n_t)_{t\in[0,1]}\right)$ the \texttt{Open} columns of the Yahoo Finance raw data. It represents the daily open prices of the actions of the companies. We define the percent returns as follows $X_t^k:=\frac{Y_t^k-Y_{t-1}^k}{Y_{t-1}^k}$ for $1\leq k\leq n$. Denote $(\zeta_t)_{t\in[0,1]}$ the \texttt{Open} column of the Investing.com raw data. In the same way as for the Yahoo Finance raw data it represents the daily open prices of the crude oil action. We define its percent returns by $\xi_t:=\frac{\zeta_t-\zeta_{t-1}}{\zeta_{t-1}}$. The \texttt{Volume} columns of the Yahoo Finance data are denoted $\left((W_t^1)_{t\in[0,1]},\ldots,(W_t^n)_{t\in[0,1]}\right)$. Its normalized increments are defined for $1\leq k\leq n$ by $Z_t^k:=\frac{W_t^k-W_{t-1}^k}{W_{t-1}^k}$.

Random times $S$ as well as the counting process $N$ are deduced from these transformed data sets. They are defined such that $S_1$ is the first time at which $\xi$ goes below $\alpha:=-0.01$, $S_2$ is the second time, \emph{etc.} and the process $N$ counts the number of times $X$ goes below $\beta:=-0.015$. Figure~\ref{fig:CountingProcessCreation} represents the trajectory of $\xi$ and gives an illustration of the construction of $S$. With this construction we get a total of $50$ times of observation. Remark that these thresholds represent a $1\%$ drop for the percent returns of the crude oil action and a $1.5\%$ drop for the percent returns of the $\mathrm{S}\& \mathrm{P}500$ companies action.

\begin{figure}[H]
\begin{center}
\includegraphics[scale=0.554]{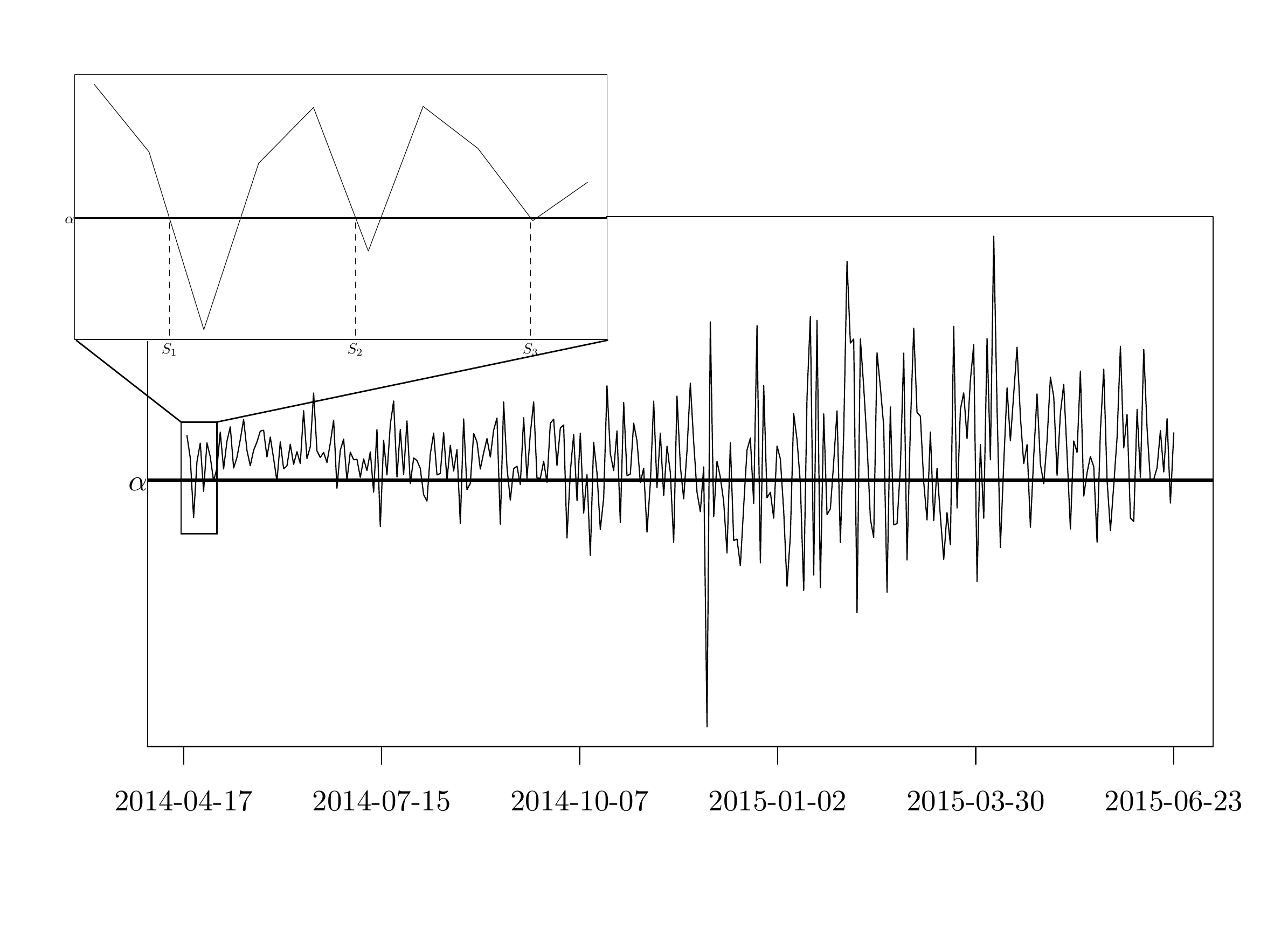}
\end{center}
\caption{\label{fig:CountingProcessCreation} Crude oil action percent returns plus a zoom on a small window of time to demonstrate the construction of the random times $S$.}
\end{figure}

We aim to compare the inhomogeneous Poisson model with our model~\eqref{eq:model}. To this end we compute our estimator over the time span defined by the data and for $10$ chosen trajectories of the covariate process $Z$. The resulting estimated intensities are given in Figure~\ref{fig:Nclust10}. In most cases ($7$ out of $10$), we estimate the same intensity as in the inhomogeneous Poisson model. In the second graph we observe that for $3$ trajectories of $Z$, taking covariates into consideration in the model provides estimations that stays close to the inhomogeneous Poisson model at first and deviates from it after a short moment.

\begin{figure}[H]
\begin{subfigure}{\linewidth}
\begin{center}
\includegraphics[scale=0.8]{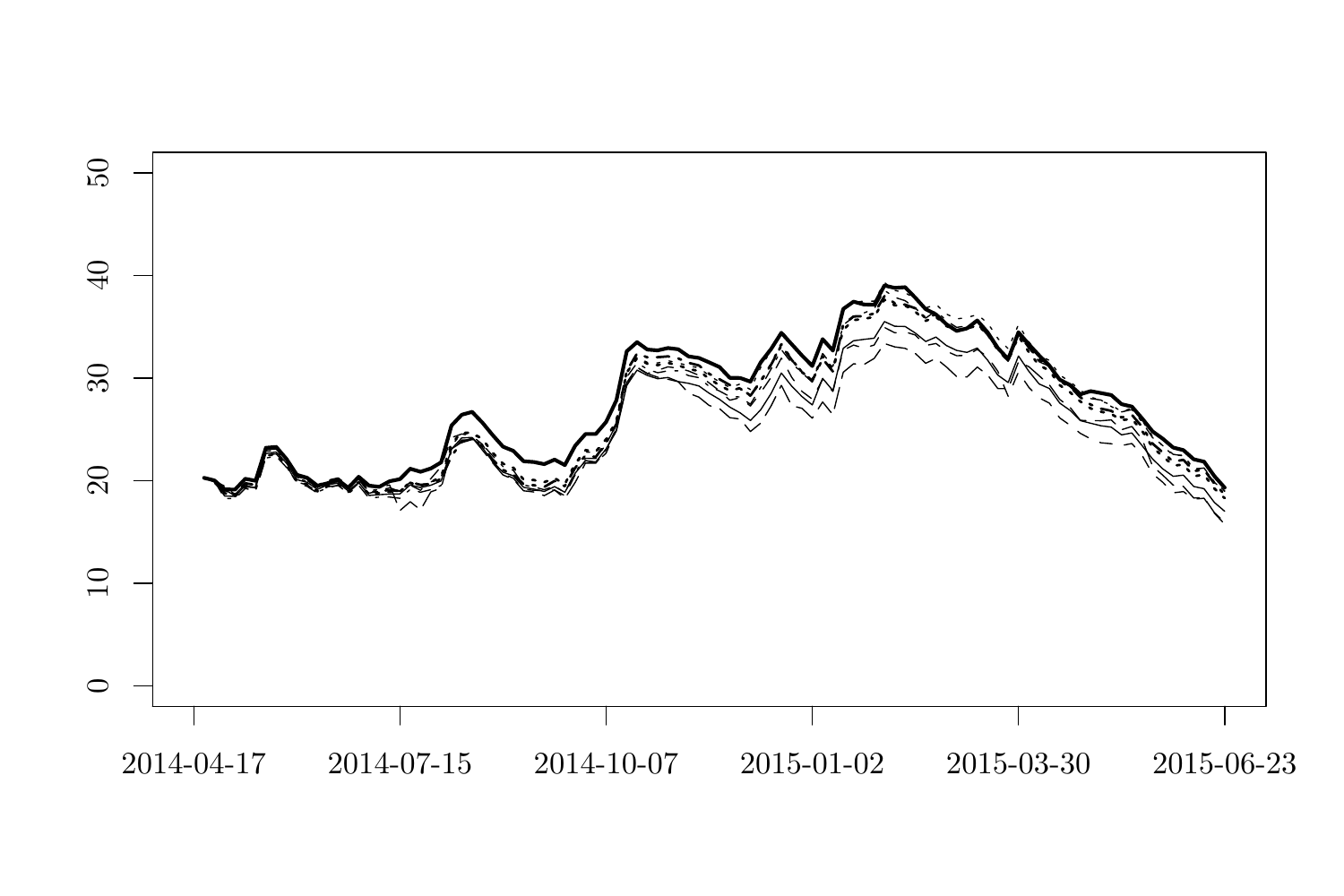}
\end{center}
\end{subfigure}
\begin{subfigure}{\linewidth}
\begin{center}
\includegraphics[scale=0.8]{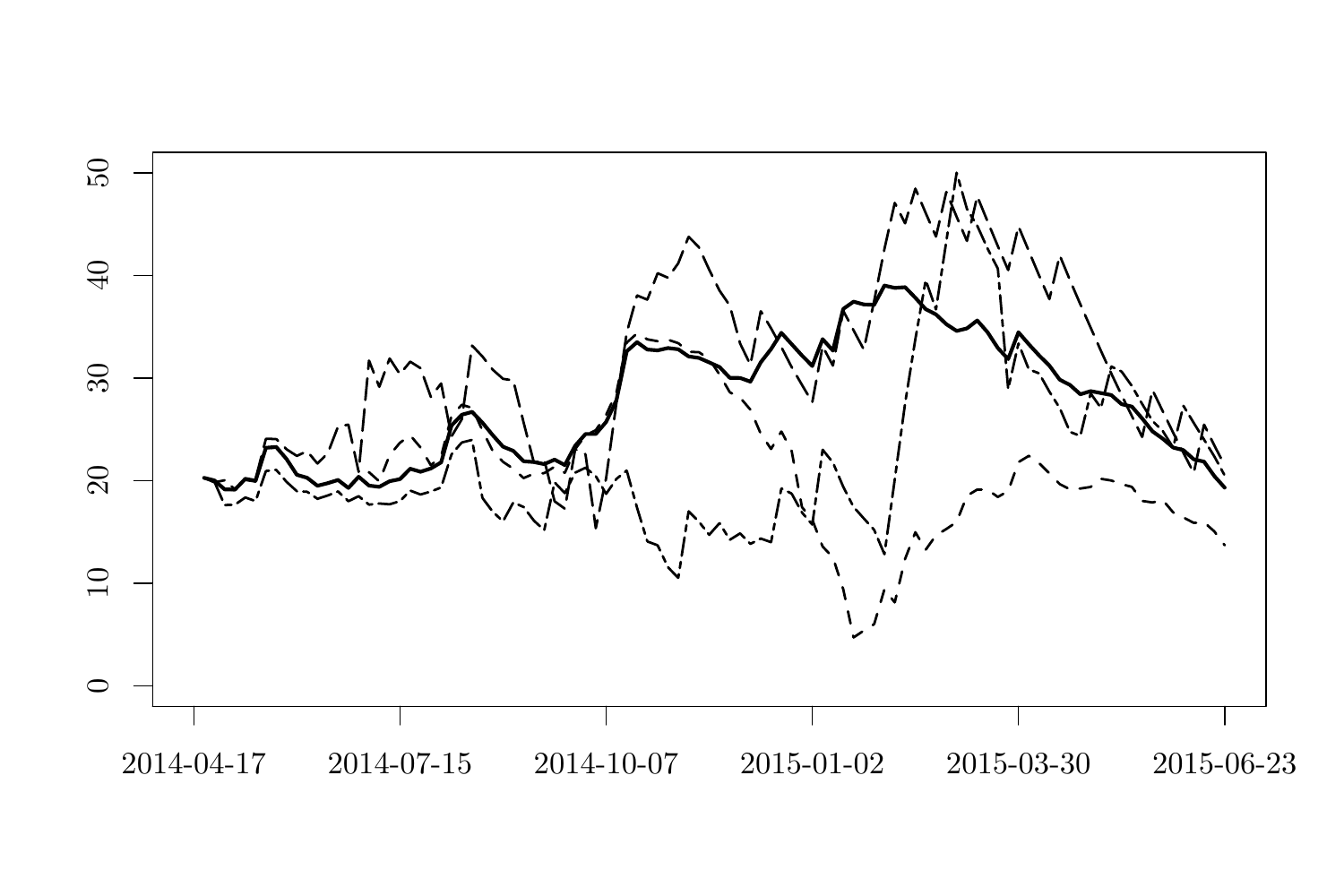}
\end{center}
\end{subfigure}
\caption{\label{fig:Nclust10} Estimation of the intensity function $\lambda$ in the Cox process model~\eqref{eq:model} for $10$ chosen trajectories of the covariate process $Z$ compared to the estimation for an inhomogeneous Poisson model (bold solid line)}
\end{figure}

\section{Proofs}\label{seq:Proofs}

In the sequel, $C$ denotes a positive constant that can change of values from line to line, $\PP_\Ss$, $\EE_\Ss$ and $\tilde\EE$ respectively stand for $\PP(\cdot|\Ss)$, $\EE(\cdot|\Ss)$ and $\EE(\cdot|\Zz,\Ss)$. For fixed $t\in[0,1]$ and $z:[0,1]\to\RR$, we define
\[\tilde \phi_{S,h,\eta}\bigl(t,\vec z_S(t)\bigr)=\frac{1}{n}\sum_{k=1}^n H_\eta\left(\vec z_S(t)-\vec Z_S^k(t)\right)\int_0^t K_h(t-s)\theta_S\left(s,\vec Z_S^k(s)\right)\dd s,\]
and $\phi\bigl(t,\vec z_S(t)\bigr)=f\bigl(\vec z_S(t)\bigr)
\theta_S\bigl(t,\vec z_S(t)\bigr)$.

\subsection{Proof of Theorem~\ref{theo:MSEcond}}

\begin{proof}

Define for $k=1,\ldots,n$ the trimmed version of the Nadaraya-Watson weights as
\begin{align*}
\Ww_{S,\eta}^k\bigl(t,\vec z_S(t)\bigr)=\frac{\frac{1}{n}H_\eta\left(\vec z_S(t)-\vec Z_S^k(t)\right)}{\frac{1}{n}\sum_{l=1}^n H_\eta\left(\vec z_S(t)-\vec Z_S^l(t)\right)\vee a_n},
\end{align*}
and consider
\begin{align*}
\bar N^k_t= N^k_t-\Lambda^k_t,\quad t\in[0,1],
\end{align*}
with $\displaystyle\Lambda^k_t=\tilde \EE N_t^k=\int_0^t \theta_S\left(s,\vec Z_S^k(s)\right)\dd s$.

Using these notations we have
\begin{align}
\tilde\theta_{S,h,\eta}\bigl(t,\vec z_S(t)\bigr)-\theta_S\bigl(t,\vec z_S(t)\bigr)&=A+B,\label{eq:decomposition}
\end{align}
where
\begin{align*}
A&=\sum_{k=1}^n\Ww_{S,\eta}^k\bigl(t,\vec z_S(t)\bigr)\int_0^t K_h(t-s)\dd \bar N_s^k, \text{ and}\\
B&=\sum_{k=1}^n\Ww_{S,\eta}^k\bigl(t,\vec z_S(t)\bigr)\int_0^t K_h(t-s)\theta_S\left(s,\vec Z_S^k(s)\right)\dd s-\theta_S\bigl(t,\vec z_S(t)\bigr).
\end{align*}
Decomposition~\eqref{eq:decomposition} gives
\begin{align}
{\rm MSE}_{\Ss}(t,z)&\leq 2\left(\EE_\Ss A^2+\EE_\Ss B^2\right).\label{eq:A2+B2}
\end{align}
On the one hand, since $\left((\bar N^k_t)_{t\in[0,1]}:k=1,\ldots,n\right)$ are independent trajectories of a conditional local martingale given $\Ss$ and $\Zz$, 
\begin{align*}
\EE_\Ss A^2&=\EE_\Ss \left(\sum_{1\leq k\leq n}\left[\Ww_{S,\eta}^k\bigl(t,\vec z_S(t)\bigr)\right]^2\tilde\EE\left[\left(\int_0^t K_h(t-s)\dd \bar N_s^k\right)^2\right]\right)\\
&\leq \EE_\Ss \left(\sup_{k=1,\ldots,n}\Ww_{S,\eta}^k\bigl(t,\vec z_S(t)\bigr)\tilde\EE\left[\left(\int_0^t K_h(t-s)\dd \bar N_s^1\right)^2\right]\right).
\end{align*}

As $(N^1_t)_{t\in[0,1]}$  is a conditional Poisson process with cumulative intensity function $\Lambda_t^1$ given $\Ss$ and $\Zz$, the quadratic variation of $(\bar N_t^1)_{t\in[0,1]}$ is the process $(N^1_t)_{t\in[0,1]}$. The Itô isometry hence gives
\begin{align*}
\tilde \EE \left[\left(\int_0^t K_h(t-s)\dd \bar N_s^1\right)^2\right] = \int_0^t K_h^2(t-s)\theta_S\left(t,\vec Z_S^1(s)\right)\dd s.
\end{align*}
So that 
\begin{align}
\EE_\Ss A^2&\leq \EE_\Ss \left[\sup_{k=1,\ldots,n}\Ww_{S,\eta}^k\bigl(t,\vec z_S(t)\bigr)\int_0^t K_h^2(t-s)\theta_S\left(s,\vec Z_S^1(s)\right)\dd s\right] \nonumber\\
&\leq \frac{ \norm{K}_2^2\Theta^{M_t}\norm{\Hh}_\infty^{dM_t} }{ nh\eta^{dM_t} } \EE_\Ss\left[\frac{1}{\tilde f_{S,\eta}\bigl(\vec z_S(t)\bigr)}\right] \nonumber\\
&\leq \frac{ \norm{K}_2^2\Theta^{M_t}\norm{\Hh}_\infty^{dM_t} }{ nh\eta^{dM_t} } \left(\frac{1}{F_0^{M_t}}+\EE_\Ss\abs{\frac{1}{\tilde f_{S,\eta}(\vec z_S(t)}-\frac{1}{f(\vec z_S(t)}}\right).\label{eq:A}
\end{align}
Combining \eqref{eq:A2+B2}, \eqref{eq:A}, lemma~\ref{lemma:I} and lemma~\ref{lemma:II}, theorem follows.

\end{proof}

\subsection{Proof of Proposition~\ref{prop:CvProba}}

\begin{proof}
Let $\varepsilon>0$, and define
\begin{align*}
\Aa&=\left\{\abs{\tilde \theta_{S,h,\eta}\big(t,\vec z_S(t)\big)-\theta_S\big(t,\vec z_S(t)\big)}>\varepsilon\right\},\text{ and}\\
\Bb&=\left\{\abs{\tilde f_{S,\eta}\big(\vec z_S(t)\big)-f\big(\vec z_S(t)\big)}\leq\frac{F_0}{2}\right\}.
\end{align*}
Then
\begin{align*}
\PP_\Ss(\Aa)\le \PP_\Ss(\Aa\cap\Bb)+\PP_\Ss(\bar\Bb).
\end{align*}
On the one hand, 
\begin{align*}
\EE_\Ss\abs{\tilde f_{S,\eta}\big(\vec z_S(t)\big)-f\big(\vec z_S(t)\big)}^2\leq\EE_\Ss\abs{\hat f_{S,\eta}\big(\vec z_S(t)\big)-f\big(\vec z_S(t)\big)}^2+a_n^2,
\end{align*}
usual properties on kernel estimation of the density \citep[see][]{Bosq1987} gives 
\[\tilde f_{S,\eta}\big(\vec z_S(t)\big)\xrightarrow[]{\LL^2}f\big(\vec z_S(t)\big)\text{ under }\PP_\Ss\]
 so that $\PP_\Ss(\bar\Bb)\xrightarrow[n\to+\infty]{}0$.

On the other hand,
\begin{align*}
\Aa\cap\Bb=\left\{\abs{\hat\phi_{S,h,\eta}\big(t,\vec z_S(t)\big) f\big(\vec z_S(t)\big)-\phi\big(t,\vec z_S(t)\big)\tilde f_{S,\eta}\big(\vec z_S(t)\big)}>\varepsilon\frac{F_0^2}{2}\right\},
\end{align*}
and Lemma \ref{lemma:phi} give $\PP_\Ss(\Aa\cap\Bb)\xrightarrow[n\to+\infty]{}0$. Combining these results with the dominated convergence theorem, proposition follows.

\end{proof}

\subsection{Proof of Theorem~\ref{theo:CvDistr}}

\begin{proof}
Define 
\begin{align*}
A_n&=\alpha_n(nh\eta^{dM_t})^{1/2} \left[\tilde f_{S,\eta} \bigl(\vec z_S(t)\bigr)-f\bigl(\vec z_S(t)\bigr)\right], \text{ and}\\
B_n&=\frac{\tilde f_{S,\eta} \bigl(\vec z_S(t)\bigr)} {f \bigl(\vec z_S(t)\bigr)} (nh\eta^{dM_t})^{1/2} \frac{\hat\phi_{S,h,\eta} \bigl(t,\vec z_S(t)\bigr)-\phi \bigl(t,\vec z_S(t)\bigr)} {\left[\hat \phi_{S,h,\eta} \bigl(t,\vec z_S(t)\bigr) \norm{K}^2_2\norm{\Hh}_2^{2dM_t}\right]^{1/2}},
\end{align*}
where 
\begin{align*}
\alpha_n=\frac{\sqrt{\hat \phi_{S,h,\eta} \bigl(t,\vec z_S(t)\bigr) }}{f \bigl(\vec z_S(t)\bigr)\norm{K}_2 \norm{\Hh}_2^{dM_t}}.
\end{align*}
On the one hand,
\begin{align*}
\alpha_n \xrightarrow[]{\PP_\Ss} \sqrt{\frac{\theta_S\big(t,\vec z_S(t)\big)}{f\big(\vec z_S(t)\big)\norm{K}^2_2\norm{\Hh}^{2dM_t}_2}}\leq \sqrt{\frac{\norm{\theta_S}_\infty}{F_0\norm{K}^2_2\norm{\Hh}^{2dM_t}_2}},
\end{align*}
and
\begin{align*}
\EE_\Ss\abs{\tilde f_{S,\eta}\bigl(\vec z_S(t)\bigr)-f\bigl(\vec z_S(t)\bigr)}^2\leq\EE_\Ss\abs{\hat f_{S,\eta}\bigl(\vec z_S(t)\bigr)-f\bigl(\vec z_S(t)\bigr)}^2+a_n^2,
\end{align*}
usual properties on kernel estimation of the density \citep[see][]{Bosq1987} gives
\begin{align*}
nh\eta^{dM_t}\EE_\Ss\abs{\tilde f_{S,\eta}\bigl(\vec z_S(t)\bigr)-f\bigl(\vec z_S(t)\bigr)}^2\xrightarrow[n\to+\infty]{}0.
\end{align*}

Combining these results with Slutsky lemma gives
\begin{align}
A_n\xrightarrow[n\to+\infty]{\PP_\Ss} 0. \label{eq:An}
\end{align}

On the other hand,
\begin{align*}
\EE_\Ss\hat\phi_{S,h,\eta}\big(t,\vec z_S(t)\big)=(\Kk_{h,\eta}*\psi)\big(t,\vec z_S(t)\big),
\end{align*}
with $\Kk$ the product kernel of $H$ and $K$ and \[\forall(u,y)\in[0,t]\times\RR^{dM_t},\psi(u,y)=f(y)\theta_S\big(u,\vec y_S(u)\big).\] Applying Lemma~\ref{lemma:WandJones} with kernel $\mathcal{K}$ and function $\psi$ gives
\begin{align*}
\EE_\Ss\hat\phi_{S,h,\eta}\big(t,\vec z_S(t)\big)\leq\phi\big(t,\vec z_S(t)\big)+\frac{(dM_t+1)^2}{2}\max(h^2,\eta^2)Q_{M_t}C_{\Hh,K},
\end{align*}
where $C_{\Hh,K}=\max\left(\int_\RR z^2\Hh(z)\dd z,\int_\RR u^2K(u)\dd u\right)$ is a finite constant since $\Hh$ and $K$ are compactly supported.
As $nh^5\eta^{dM_t}\xrightarrow[n\to+\infty]{} 0$ and $nh\eta^{dM_t+4}\xrightarrow[n\to+\infty]{} 0$,
\begin{align*}
(nh\eta^{dM_t})^{1/2}\frac{\EE_\Ss\hat\phi_{S,h,\eta} \bigl(t,\vec z_S(t)\bigr)-\phi \bigl(t,\vec z_S(t)\bigr)} {\left[ \phi \bigl(t,\vec z_S(t)\bigr) \norm{K}^2_2\norm{\Hh}_2^{2dM_t}\right]^{1/2}} \xrightarrow[n\to+\infty]{}0.
\end{align*}
Combining this result with Lemma~\ref{lemma:phiDistr} and Slutsky lemma gives
\begin{align}
B_n\toDd \Nn(0,1),\text{ under }\PP_\Ss.\label{eq:Bn}
\end{align}

As
\begin{align*}
(nh\eta^{dM_t})^{1/2}\frac{\tilde \theta_{S,h,\eta}\bigl(t,\vec z_S(t)\bigr)-\theta_S\bigl(t,\vec z_S(t)\bigr)}{\left[\hat \phi_{S,h,\eta} \bigl(t,\vec z_S(t)\bigr)\norm{K}^2_2\norm{\Hh}_2^{2dM_t}/\tilde f_{S,\eta}^2\bigl(\vec z_S(t)\bigr)\right]^{1/2}}=B_n-A_n ,
\end{align*}
using \eqref{eq:An} and \eqref{eq:Bn}, we have
\begin{align*}
\EE_\Ss\left(e^{iu(B_n-A_n)}\right)\xrightarrow[n\to+\infty]{} e^{-\frac{u^2}{2}}.
\end{align*} 
Combining this result with the dominated convergence theorem, theorem follows.

\end{proof}

\section{Appendix}\label{seq:Appendix}

In the sequel $C$ denotes a positive constant under $\PP_\Ss$ that can change of values from line to line and $\var_\Ss$ stands for $\var(\cdot|\Ss)$. For simplicity, we may use the notations $\tilde f$, $\tilde \theta$, $\hat \phi$ and $\tilde \phi$ instead of $\tilde f_{S,\eta}$, $\tilde \theta_{S,h,\eta}$, $\hat \phi_{S,h,\eta}$ and $\tilde \phi_{S,h,\eta}$ respectively.

\begin{lemma}\label{lemma:WandJones}
Let $\mathbf{K}$ be a bounded, compactly supported $d$-variate kernel satisfying
\begin{align*}
\int_{\RR^d} \mathbf{K}(z)\dd z=1\text{ and }\int_{\RR^d} z\mathbf{K}(z)\dd z=0.
\end{align*}

Let $\mathbf{h}=\mathrm{diag}(h_1^2,\ldots,h_d^2)$ be such that $n^{-1}\abs{\mathbf{h}}^{-1/2}$ and all entries of $\mathbf{h}$ approach zero as $n$ tends to $+\infty$.  Also, we assume that the ratio of the largest and smallest eigenvalues of $\mathbf{h}$ is bounded for all $n$.

Denote $\mathbf{K}_\mathbf{h}(z)=\abs{\mathbf{h}}^{-1/2} \mathbf{K}(\mathbf{h}^{-1/2}z).$

Let $f$ be a $d$-variate function. Also, let $\mathfrak{D}_f(z)$ be the vector of first-order partial derivatives of $f$ and $\mathfrak{H}_f(z)$ be the Hessian matrix of $f$. Let's assume that for all $1\leq k\leq d$, \[\forall y\in\RR^d, \lnorm{\frac{\partial^2 f}{\partial x_k^2}(y)}_ 2\leq F_d,\] where $F_d$ is a positive constant depending on dimension $d$.

Then 
\begin{align*}
\abs{\int_{\RR^d} \mathbf{K}_{\mathbf{h}}(x-y)f(y)\dd y- f(x)}\leq \frac{d}{2}\norm{\mathbf{h}}_2 F_d\int_{\RR^d}\norm{z}_2^2\mathbf{K}(z)\dd z.
\end{align*}
\end{lemma}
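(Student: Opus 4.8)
The plan is to prove a multivariate Taylor-expansion bound on the bias of a kernel convolution. First I would write the standard identity
\[
\int_{\RR^d}\mathbf{K}_{\mathbf{h}}(x-y)f(y)\dd y=\int_{\RR^d}\mathbf{K}(z)f\bigl(x-\mathbf{h}^{1/2}z\bigr)\dd z,
\]
obtained by the change of variables $y=x-\mathbf{h}^{1/2}z$, using $\abs{\mathbf{h}}^{-1/2}\dd y=\dd z$. Since $\int_{\RR^d}\mathbf{K}(z)\dd z=1$, subtracting $f(x)$ lets me write
\[
\int_{\RR^d}\mathbf{K}_{\mathbf{h}}(x-y)f(y)\dd y- f(x)=\int_{\RR^d}\mathbf{K}(z)\bigl[f\bigl(x-\mathbf{h}^{1/2}z\bigr)-f(x)\bigr]\dd z.
\]

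Next I would apply Taylor's theorem with integral (or Lagrange) remainder to $f$ along the segment from $x$ to $x-\mathbf{h}^{1/2}z$. Writing $v=\mathbf{h}^{1/2}z$, this gives
\[
f(x-v)-f(x)=-\mathfrak{D}_f(x)^{\top}v+\tfrac12 v^{\top}\mathfrak{H}_f(\xi)\,v
\]
for some $\xi$ on the segment. The first-order term integrates to zero because $\int_{\RR^d}z\mathbf{K}(z)\dd z=0$ and $v$ is linear in $z$. So the bias equals $\tfrac12\int_{\RR^d}\mathbf{K}(z)\,v^{\top}\mathfrak{H}_f(\xi)v\dd z$ with $v=\mathbf{h}^{1/2}z$. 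I would then bound the quadratic form: $\abs{v^{\top}\mathfrak{H}_f(\xi)v}\le\sum_{k,l}\abs{v_k}\abs{v_l}\abs{\partial^2_{x_k x_l}f(\xi)}$, and control the off-diagonal second derivatives by the diagonal ones via the elementary inequality $\abs{\partial^2_{x_k x_l}f}\le\tfrac12(\norm{\partial^2_{x_k^2}f}+\norm{\partial^2_{x_l^2}f})$ coming from the nonnegativity of the Hessian's $2\times2$ principal minors (or just Cauchy--Schwarz on the mixed partials); together with the hypothesis $\norm{\partial^2 f/\partial x_k^2}_2\le F_d$ this yields $\abs{v^{\top}\mathfrak{H}_f(\xi)v}\le F_d\,\bigl(\sum_k\abs{v_k}\bigr)^2\le d\,F_d\,\norm{v}_2^2$. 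Since $\norm{v}_2^2=\norm{\mathbf{h}^{1/2}z}_2^2\le\norm{\mathbf{h}}_2\norm{z}_2^2$ (the operator norm of $\mathbf{h}^{1/2}$ being $\norm{\mathbf{h}}_2^{1/2}$), I conclude
\[
\abs{\int_{\RR^d}\mathbf{K}_{\mathbf{h}}(x-y)f(y)\dd y- f(x)}\le\frac{d}{2}\norm{\mathbf{h}}_2F_d\int_{\RR^d}\norm{z}_2^2\mathbf{K}(z)\dd z,
\]
all integrals being finite because $\mathbf{K}$ is bounded and compactly supported.

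The only genuinely delicate point is the handling of the quadratic form in the Hessian: the hypothesis only bounds the \emph{pure} second derivatives $\partial^2 f/\partial x_k^2$, not the mixed ones, so one must either invoke the inequality for mixed partials in terms of pure ones or reorganize the Taylor remainder so that only diagonal terms appear (e.g. by expanding coordinate-by-coordinate). I expect this bookkeeping — plus keeping track of the combinatorial factor $d$ versus $d^2$ and making sure the constant matches $\frac{d}{2}F_d\int\norm{z}_2^2\mathbf{K}(z)\dd z$ exactly — to be the main obstacle; everything else (change of variables, vanishing of the first-order term, finiteness of the moment of $\mathbf{K}$) is routine. Note the assumptions on $\mathbf{h}$'s eigenvalue ratio and on $n^{-1}\abs{\mathbf{h}}^{-1/2}\to0$ are not needed for this pointwise bias bound itself; they are there so the lemma can later be combined with a variance bound.
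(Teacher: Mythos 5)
Your overall architecture (change of variables $y=x-\mathbf{h}^{1/2}z$, cancellation of the first-order term via $\int_{\RR^d} z\mathbf{K}(z)\dd z=0$, Taylor--Lagrange remainder, and $\norm{\mathbf{h}^{1/2}z}_2^2\leq\norm{\mathbf{h}}_2\norm{z}_2^2$) is exactly the paper's, and your closing remark that the conditions on $n^{-1}\abs{\mathbf{h}}^{-1/2}$ and on the eigenvalue ratio are not needed for this pointwise bias bound is correct. The gap is precisely at the point you yourself flag as delicate: the inequality $\abs{\partial^2_{x_kx_l}f}\leq\tfrac12\bigl(\abs{\partial^2 f/\partial x_k^2}+\abs{\partial^2 f/\partial x_l^2}\bigr)$ is false for a general twice-differentiable $f$. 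Its two proposed justifications both require the Hessian to be positive semidefinite ($2\times2$ principal minors nonnegative, or the Cauchy--Schwarz bound $\abs{a_{kl}}\leq\sqrt{a_{kk}a_{ll}}$), which is not available here; the counterexample $f(x_1,x_2)=x_1x_2$ has vanishing pure second derivatives and mixed partial equal to $1$, so no bound on the diagonal entries alone can control the quadratic form $v^\top\mathfrak{H}_f(\xi)v$. The fallback you mention (expanding coordinate-by-coordinate so only diagonal terms appear) does not rescue the argument either: the first-order terms then sit at partially shifted points and no longer cancel against $\int z\mathbf{K}(z)\dd z=0$ without again invoking mixed second derivatives.

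The paper's proof bounds the remainder by $\tfrac12\norm{\mathbf{h}}_2\sup_{y}\norm{\mathfrak{H}_f(y)}_2\int\norm{z}_2^2\mathbf{K}(z)\dd z$ and then uses $\sup_y\norm{\mathfrak{H}_f(y)}_2\leq dF_d$. This is coherent only if the hypothesis $\lnorm{\frac{\partial^2 f}{\partial x_k^2}(y)}_2\leq F_d$ is read as a bound on a vector, namely the Euclidean norm of $\frac{\partial}{\partial x_k}\nabla f(y)$ (the $k$-th row of the Hessian) --- note the otherwise redundant $\norm{\cdot}_2$ in the statement --- in which case the Frobenius bound gives $\norm{\mathfrak{H}_f(y)}_2\leq\sqrt{d}\,F_d\leq dF_d$ and the constant follows. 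So the repair for your write-up is not to derive mixed-partial bounds from pure ones (impossible in general), but to use the hypothesis in this row-of-the-Hessian sense, or equivalently to assume control of all second-order partials, and then bound the quadratic form by the spectral (or Frobenius) norm of $\mathfrak{H}_f$ as the paper does.
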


\begin{proof}
By the multivariate version of Taylor's theorem with Lagrange remainder, for some $\gamma\in(0,1)$,
\begin{align*}
\abs{\int_{\RR^d} \mathbf{K}_{\mathbf{h}}(x-y)f(y)\dd y-f(x)}&\leq\abs{\frac{1}{2}\int_{\RR^d} z^\top \mathbf{h}^{1/2}\mathfrak{H}_f(x-\gamma\mathbf{h}^{1/2}z) \mathbf{h}^{1/2}z\mathbf{K}(z)\dd z }\\
&\leq \frac{1}{2} \norm{\mathbf{h}}_2 \sup_{y\in\RR^d} \norm{\mathfrak{H}_f(y)}_2 \int_{\RR^d} \norm{z}^2_2 \mathbf{K}(z)\dd z \\
&\leq \frac{d}{2}\norm{\mathbf{h}}_2 F_d \int_{\RR^d} \norm{z}_2^2 \mathbf{K}(z)\dd z,
\end{align*}
as $\displaystyle\int_{\RR^d} \mathbf{K}(z)\dd z=1,$ and $\displaystyle\int_{\RR^d} z\mathbf{K}(z)\dd z=0$.
\end{proof}

\begin{lemma}\label{lemma:I}
Under the assumptions of Theorem~\ref{theo:MSEcond}, for $p= 1$
\begin{align*}
\EE_\Ss \abs{\frac{1}{\tilde f_{S,\eta}\big(\vec z_S(t)\big)}-\frac{1}{f\big(\vec z_S(t)\big)}}^p &\!\!\! \leq \Clemma{0}\bigg((n\eta^{dM_t})^{\varepsilon-1}  + \eta^{2} + \frac{1}{(n\eta^{dM_t})^{1/2}}\\
&\qquad + \frac{n\eta^{dM_t+4}}{(n\eta^{dM_t})^{\varepsilon}} + \frac{1}{(n\eta^{dM_t})^{\varepsilon}}\bigg),
\end{align*}
for $p=2$,
\begin{align*}
\EE_\Ss \abs{\frac{1}{\tilde f_{S,\eta}\big(\vec z_S(t)\big)}-\frac{1}{f\big(\vec z_S(t)\big)}}^p & \leq \Clemma{1} \bigg((n\eta^{dM_t})^{2\varepsilon-2} + \frac{1}{n\eta^{dM_t}} + \eta^{4} +\frac{1}{(n\eta^{dM_t})^{2\varepsilon}}\\
&\qquad + \frac{1}{(n\eta^{dMt})^{2\varepsilon+1}} + \frac{(n\eta^{dMt+4})^2}{(n\eta^{dM_t})^{2\varepsilon}}\bigg),
\end{align*}
and for $p>2$
\begin{align*}
\EE_\Ss \abs{\frac{1}{\tilde f_{S,\eta}\big(\vec z_S(t)\big)}-\frac{1}{f\big(\vec z_S(t)\big)}}^p & \leq \Clemma{2} \Bigg((n\eta^{dM_t})^{p\varepsilon-p} + \frac{1}{(n\eta^{dM_t})^{p/2}} + \frac{1}{(n\eta^{dM_t})^{p-1}}\\
&\qquad + \eta^{2p} + \frac{1}{(n\eta^{dM_t})^{p\varepsilon}} + \frac{1}{(n\eta^{dMt})^{p\varepsilon+p-1}}\\
&\qquad  + \frac{(n\eta^{dM_t+4})^p}{(n\eta^{dM_t})^{p\varepsilon}}\Bigg),
\end{align*}

where $\Clemma{0}$, $\Clemma{1}$ and $\Clemma{2}$ depend only on $d$, $M_t$, $Q_{M_t}$, $F_0,$ $F_\infty$, $\norm{\Hh}_2$, $\norm{\Hh}_\infty$ and $\int_{\RR}z^2\Hh(z)\dd z$. 
\end{lemma}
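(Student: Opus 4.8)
The plan is to reduce the control of $\EE_\Ss\abs{1/\tilde f - 1/f}^p$ to moment bounds for $\tilde f - f$ by exploiting the trimming threshold $a_n$, and then to bound those moments via a standard bias/variance split of the kernel density estimator, paying careful attention to the fact that every constant carries a factor that is exponential in $M_t$. First I would write, on the event where $\hat f \geq a_n$ so that $\tilde f = \hat f$,
\begin{align*}
\abs{\frac{1}{\tilde f}-\frac{1}{f}} = \frac{\abs{\tilde f - f}}{\tilde f\, f} \leq \frac{\abs{\tilde f - f}}{a_n\, F_0^{M_t}},
\end{align*}
and on the complementary event I would use $1/\tilde f \leq 1/a_n$ together with $1/f \leq 1/F_0^{M_t}$ and $\1_{\hat f < a_n} \leq \1_{\abs{\hat f - f} > f - a_n}$; since $a_n \to 0$ while $f \geq F_0^{M_t}$, for $n$ large this is $\1_{\abs{\hat f - f} > F_0^{M_t}/2}$, which by Markov applied at a suitable power is itself controlled by a moment of $\abs{\hat f - f}$. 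This splits the target into $a_n^{-p}$ times a moment of $\abs{\tilde f - f}^p$ (and one easily sees $\abs{\tilde f - f} \leq \abs{\hat f - f} + a_n$, so $\tilde f$ can be traded for $\hat f$ at the cost of an extra $a_n^p$ term).

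Next I would bound $\EE_\Ss\abs{\hat f - f}^p$ by the bias plus the centered moment. The bias $\abs{\EE_\Ss \hat f - f}$ is handled by Lemma~\ref{lemma:WandJones} applied with $\mathbf{K} = \Hh^{\otimes dM_t}$, $d$ replaced by $dM_t$, bandwidth matrix $\eta^2 I$, and $F_d$ replaced by $Q_{M_t}$ from {\bf\ref{hyp:differentiability}}; this yields a bias of order $\tfrac{dM_t}{2}\eta^2 Q_{M_t}\int\norm{z}_2^2\Hh^{\otimes dM_t}(z)\dd z$, and the integral factorizes into $dM_t$ copies of $\int u^2\Hh(u)\dd u$ times $\norm{\Hh}_1^{dM_t-1}$-type factors, all of which are absorbed into the $M_t$-dependent constant. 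For the centered moments I would use a Rosenthal / Marcinkiewicz–Zygmund inequality for the i.i.d.\ sum $\hat f - \EE_\Ss\hat f = \tfrac1n\sum_k\big(H_\eta(\cdots) - \EE_\Ss H_\eta(\cdots)\big)$: each summand is bounded by $\norm{\Hh}_\infty^{dM_t}\eta^{-dM_t}$ and has conditional variance $\lesssim F_\infty^{M_t}\norm{\Hh}_2^{2dM_t}\eta^{-dM_t}$ by the usual change of variables, giving for $p=2$ a variance term of order $(n\eta^{dM_t})^{-1}$ and for general $p$ a main term $(n\eta^{dM_t})^{-p/2}$ plus a large-deviation term $(n\eta^{dM_t})^{1-p}$ coming from the $L^p$ norm of a single centered summand. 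Multiplying the bias$^p$ by $a_n^{-p} = (n\eta^{dM_t})^{p(1-\varepsilon)}$ produces the $\eta^{2p}(n\eta^{dM_t+4})^{\text{-type}}$ and $(n\eta^{dM_t})^{p\varepsilon}$ denominators in the statement, multiplying the variance terms by $a_n^{-p}$ produces the remaining listed terms, and the leftover $a_n^p$ and $a_n^{2p}$ pieces give the pure $(n\eta^{dM_t})^{p\varepsilon - p}$ contributions; collecting everything and defining $\Clemma{0},\Clemma{1},\Clemma{2}$ as the (finite, $M_t$- and $Q_{M_t}$-dependent) products of the constants encountered finishes each of the three cases.

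The main obstacle, and the step requiring the most care, is bookkeeping the dimension-dependent constants: because $dM_t$ is itself random and the kernel is a $dM_t$-fold product, essentially every quantity ($\norm{\Hh^{\otimes dM_t}}_\infty = \norm{\Hh}_\infty^{dM_t}$, $\norm{\Hh^{\otimes dM_t}}_2^2 = \norm{\Hh}_2^{2dM_t}$, the moment integral, the lower and upper density bounds $F_0^{M_t}, F_\infty^{M_t}$, and the Rosenthal constant which depends on $p$ but not on dimension) carries an exponential-in-$M_t$ factor, and one must verify that after the trimming argument these all combine into a single random constant that is a function of $d$, $M_t$, $Q_{M_t}$, $F_0$, $F_\infty$, $\norm{\Hh}_2$, $\norm{\Hh}_\infty$ and $\int u^2\Hh(u)\dd u$ only, with no hidden dependence on $n$, $h$ or $\eta$. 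A secondary subtlety is making the Markov step for $\PP_\Ss(\hat f < a_n)$ quantitatively match the exponents claimed: one should apply it at power $p$ (or higher) and absorb the resulting $(F_0^{M_t}/2)^{-p}$ into the constant, checking that the bound $(n\eta^{dM_t})^{-p/2} + \eta^{2p} + (n\eta^{dM_t})^{1-p}$ obtained this way is dominated by the terms already listed so that no new term appears.
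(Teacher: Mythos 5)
There is a genuine gap in the first step, and it is exactly the step where the lemma's exponents are decided. On your main event you bound $\abs{1/\tilde f-1/f}=\abs{\tilde f-f}/(\tilde f f)\le \abs{\tilde f-f}/(a_n F_0^{M_t})$, i.e.\ you pay the factor $a_n^{-1}$ against the \emph{first} power of the deviation, and later you indeed assemble the result as $a_n^{-p}$ times the $p$-th moment of $\hat f-f$. That is too lossy to give the statement: the variance part then contributes $a_n^{-p}(n\eta^{dM_t})^{-p/2}=(n\eta^{dM_t})^{p(1/2-\varepsilon)}$, which \emph{diverges} because $\varepsilon\in(0,1/2)$, and the bias part contributes $a_n^{-p}\eta^{2p}=(n\eta^{dM_t+2})^{p}/(n\eta^{dM_t})^{p\varepsilon}$, which is larger by a factor $\eta^{-2p}$ than the listed term $(n\eta^{dM_t+4})^{p}/(n\eta^{dM_t})^{p\varepsilon}$. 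So the final bookkeeping claim (``multiplying the variance terms by $a_n^{-p}$ produces the remaining listed terms'') does not hold; none of the listed terms carries $a_n^{-p}$ against a $p$-th moment. The paper avoids this by using the exact second-order expansion
$\frac{1}{\tilde f}=\frac{1}{f}-\frac{\tilde f-f}{f^{2}}+\frac{(\tilde f-f)^{2}}{f^{2}\tilde f}$,
so that the linear term is damped only by $f^{-2}\le F_0^{-2M_t}$ (no $a_n$ at all), and the trimming bound $1/\tilde f\le a_n^{-1}$ hits only the \emph{quadratic} remainder; hence $a_n^{-p}$ multiplies the $2p$-th moment, giving precisely $(n\eta^{dM_t})^{-p\varepsilon}$, $(n\eta^{dM_t})^{-(p\varepsilon+p-1)}$ and $(n\eta^{dM_t+4})^{p}/(n\eta^{dM_t})^{p\varepsilon}$. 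If you want to keep an event-splitting argument instead, you must split on $\{\abs{\hat f-f}\le F_0^{M_t}/2\}$ (on which $\tilde f\ge F_0^{M_t}/2$, so no $a_n^{-1}$ appears) rather than on $\{\hat f\ge a_n\}$, and reserve $1/\tilde f\le a_n^{-1}$ for the bad event, where Markov or Cauchy--Schwarz on higher moments restores the smallness; as written, your split cannot reach the claimed bounds.

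The remainder of your plan is in line with the paper and would go through once the first step is fixed: the reduction $\EE_\Ss\abs{\tilde f-f}^{q}\le\EE_\Ss\abs{\hat f-f}^{q}+a_n^{q}$, the bias bound via Lemma~\ref{lemma:WandJones} applied to the product kernel in dimension $dM_t$ with $F_d$ replaced by $Q_{M_t}$, the centered moments via Jensen for $q\le2$ and a Khintchine/Rosenthal-type inequality for $q>2$ (main term $(n\eta^{dM_t})^{-q/2}$, secondary term $(n\eta^{dM_t})^{-(q-1)}$), and the observation that all constants are exponential in $M_t$ but depend only on $d$, $M_t$, $Q_{M_t}$, $F_0$, $F_\infty$, $\norm{\Hh}_2$, $\norm{\Hh}_\infty$ and $\int_\RR z^{2}\Hh(z)\dd z$ are exactly the ingredients of the paper's proof.
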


\begin{proof}
Since
\begin{align*}
\frac{1}{\tilde f\big(\vec z_S(t)\big)}=\frac{1}{f\big(\vec z_S(t)\big)}-\frac{\tilde f\big(\vec z_S(t)\big)-f\big(\vec z_S(t)\big)}{f^2\big(\vec z_S(t)\big)}+\frac{\Big(\tilde f\big(\vec z_S(t)\big)-f\big(\vec z_S(t)\big)\!\Big)^2}{f^2\big(\vec z_S(t)\big)\tilde f\big(\vec z_S(t)\big)},
\end{align*}
we have
\begin{align*}
\abs{\frac{1}{\tilde f\big(\vec z_S(t)\big)}\!-\!\frac{1}{f\big(\vec z_S(t)\big)}}^p\!\!\!\!\leq\! 2^{p-1}\!\!\left(\!\frac{\abs{\tilde f\big(\vec z_S(t)\big)\!- \!f\big(\vec z_S(t)\big)}^{p}}{F_0^{2pM_t}}\!+\!\frac{\abs{\tilde f\big(\vec z_S(t)\big)\!-\!f\big(\vec z_S(t)\big)}^{2p}}{F_0^{2pM_t} a_n^p}\!\right).
\end{align*}
To control the conditional expectation of this quantity we need to control \[\EE_\Ss\abs{\tilde f\big(\vec z_S(t)\big)-f\big(\vec z_S(t)\big)}^q\text{ for }q\ge 1.\] We have
\begin{align*}
\EE_\Ss\abs{\tilde f_{S,\eta}\big(\vec z_S(t)\big)-f\big(\vec z_S(t)\big)}^{q}&\leq \EE_\Ss\abs{\hat f_{S,\eta}\big(\vec z_S(t)\big)-f\big(\vec z_S(t)\big)}^{q}+a_n^{q}\\
&\leq 2^{q-1}\EE_\Ss\abs{\hat f_{S,\eta}\big(\vec z_S(t)\big)-\EE_\Ss \hat f_{S,\eta}\big(\vec z_S(t)\big)}^q\\
&\qquad+2^{q-1}\abs{\EE_\Ss \hat f_{S,\eta}\big(\vec z_S(t)\big)-f\big(\vec z_S(t)\big)}^q+a_n^q.
\end{align*}
On the one hand, as $\EE_\Ss \hat f_{S,\eta}\big(\vec z_S(t)\big)=(H*f)\big(\vec z_S(t)\big)$, applying Lemma~\ref{lemma:WandJones} with kernel $H$ and function $f$ gives
\begin{align*}
\abs{\EE_\Ss \hat f_{S,\eta}\big(\vec z_S(t)\big)-f\big(\vec z_S(t)\big)}&\leq \frac{dM_t}{2}\eta^2 Q_{M_t} \int_{\RR^{dM_t}} \norm{z}_2^2 H(z)\dd z\\
&\leq \frac{1}{2}Q_{M_t}( dM_t\eta)^2 \int_\RR z^2 \Hh(z)\dd z.
\end{align*}
On the other hand define
\begin{align*}
\zeta_k=H_\eta\left(\vec z_S(t)-\vec Z_S^k(t)\right)-\EE_\Ss H_\eta\left(\vec z_S(t)-\vec Z_S^k(t)\right).
\end{align*}
Then
\begin{align*}
\EE_\Ss \abs{\hat f_{S,\eta}\big(\vec z_S(t)\big)-\EE_\Ss \hat f_{S,\eta}\big(\vec z_S(t)\big)}^q=\EE_\Ss\abs{\frac{1}{n}\sum_{k=1}^n \zeta_k}^q.
\end{align*}
The $\zeta_k$'s are conditional {\it i.i.d.} random variable given $\Ss$ so that for $q\le 2$, Jensen inequality gives
\begin{align*}
\EE_\Ss \abs{\frac{1}{n}\sum_{k=1}^n \zeta_k}^q\le \frac{F_\infty^{qM_t/2}\norm{\Hh}_2^{qdM_t}}{(n\eta^{dM_t})^{q/2}}.
\end{align*}
For $q>2$, by Khintchine inequality \citep[see][]{Bretagnolle1979} 
\begin{align*}
\EE_\Ss \abs{\frac{1}{n}\sum_{k=1}^n \zeta_k}^q\le \K_q\left(\frac{F_\infty^{qM_t/2}\norm{\Hh}_2^{qdM_t}}{(n\eta^{dM_t})^{q/2}}+\frac{2^{q-2}F_\infty^{M_t}(\norm{\Hh}_2^2 \norm{\Hh}_\infty^{q-2})^{dM_t}}{(n\eta^{dM_t})^{q-1}}\right),
\end{align*}
where $\K_q$ is the global constant in Khintchine inequality depending only on $q$.
Lemma follows.
\end{proof}

\begin{lemma}\label{lemma:II}
Under the assumptions of Theorem~\ref{theo:MSEcond},
\begin{align*}
\EE_\Ss B^2&\leq \Clemma{3}\Bigg(\frac{1}{n\eta^{dM_t}} +  h^4+\eta^4 + (n\eta^{dM_t})^{2\varepsilon-2} +\frac{1}{(n\eta^{dM_t})^{2\varepsilon}}\\
&\qquad + \frac{1}{(n\eta^{dMt})^{2\varepsilon+1}} + \frac{(n\eta^{dMt+4})^2}{(n\eta^{dM_t})^{2\varepsilon}} + \frac{1}{(n\eta^{dM_t})^2}+\frac{1}{(n\eta^{dM_t})^{3}}+h^8+\eta^8\\
&\qquad + (n\eta^{dM_t})^{4\varepsilon-4} + \frac{1}{(n\eta^{dM_t})^{4\varepsilon}} + \frac{1}{(n\eta^{dMt})^{4\varepsilon+3}} + \frac{(n\eta^{dM_t+4})^4}{(n\eta^{dM_t})^{4\varepsilon}}  \Bigg).
\end{align*}
where $\Clemma{3}$ depends only on $d$, $M_t$, $Q_{M_t}$, $F_0$, $F_\infty$, $\Theta$, $\norm{K}_2$, $\norm{\Hh}_2$, $\norm{\Hh}_\infty$, $\int_{\RR} z^2\Hh(z)\dd z$ and $\int_{\RR} u^2K(u)\dd u$.
\end{lemma}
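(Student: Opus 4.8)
The plan is to split $B$ into the genuine kernel bias of the numerator and a remainder governed by the trimming of the denominator, then control each piece by moment estimates, calling on Lemma~\ref{lemma:I} for everything that concerns $\tilde f$. Recalling that $\Ww^k_{S,\eta}\big(t,\vec z_S(t)\big)=\tfrac1n H_\eta\big(\vec z_S(t)-\vec Z^k_S(t)\big)/\tilde f\big(\vec z_S(t)\big)$ and that $\phi=f\theta_S$, the expression defining $B$ is $\tilde\phi/\tilde f-\theta_S$, hence
\[
B=\frac{\tilde\phi-\phi}{\tilde f}+\phi\left(\frac1{\tilde f}-\frac1f\right),
\]
and $\EE_\Ss B^2\le 2\,\EE_\Ss\big[(\tilde\phi-\phi)^2/\tilde f^{\,2}\big]+2\phi^2\,\EE_\Ss(1/\tilde f-1/f)^2$. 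Since $\phi\le(F_\infty\Theta)^{M_t}$ by \ref{hyp:norm}, the second summand is controlled directly by Lemma~\ref{lemma:I} with $p=2$, which accounts for the terms $(n\eta^{dM_t})^{2\varepsilon-2}$, $(n\eta^{dM_t})^{-1}$, $\eta^{4}$, $(n\eta^{dM_t})^{-2\varepsilon}$, $(n\eta^{dM_t})^{-2\varepsilon-1}$ and $(n\eta^{dM_t+4})^2(n\eta^{dM_t})^{-2\varepsilon}$ of the statement.

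For the first summand I would use $1/\tilde f^{\,2}\le 2/f^{2}+2(1/\tilde f-1/f)^{2}$ together with $1/f\le F_0^{-M_t}$, which reduces the problem to bounding $\EE_\Ss(\tilde\phi-\phi)^{2}$, $\EE_\Ss(\tilde\phi-\phi)^{4}$ and $\EE_\Ss(1/\tilde f-1/f)^{4}$, the cross term being split via $\EE_\Ss\big[(\tilde\phi-\phi)^{2}(1/\tilde f-1/f)^{2}\big]\le\tfrac12\EE_\Ss(\tilde\phi-\phi)^{4}+\tfrac12\EE_\Ss(1/\tilde f-1/f)^{4}$. The last expectation is Lemma~\ref{lemma:I} with $p=4$ and produces the remaining summands $(n\eta^{dM_t})^{4\varepsilon-4}$, $(n\eta^{dM_t})^{-2}$, $(n\eta^{dM_t})^{-3}$, $\eta^{8}$, $(n\eta^{dM_t})^{-4\varepsilon}$, $(n\eta^{dM_t})^{-4\varepsilon-3}$ and $(n\eta^{dM_t+4})^4(n\eta^{dM_t})^{-4\varepsilon}$. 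For the moments of $\tilde\phi-\phi$ I would split bias and fluctuation: conditionally on $\Ss$, $\tilde\phi$ is an average of $n$ i.i.d.\ variables $\xi_k=H_\eta\big(\vec z_S(t)-\vec Z^k_S(t)\big)\int_0^t K_h(t-s)\theta_S\big(s,\vec Z^k_S(s)\big)\dd s$, and using $\norm{\theta_{S,M_t}}_\infty<\Theta^{M_t}$, $f_{M_t}\le F_\infty^{M_t}$, $\supp K\subseteq[0,1]$ and $\supp\Hh\subseteq[-1,1]$ one gets $\EE_\Ss\xi_1^{2}\le C^{M_t}\eta^{-dM_t}$ and $\EE_\Ss\xi_1^{4}\le C^{M_t}\eta^{-3dM_t}$; hence $\var_\Ss\tilde\phi\le C^{M_t}(n\eta^{dM_t})^{-1}$ and, by the fourth--moment identity for a sum of conditionally i.i.d.\ centred variables (or Khintchine's inequality, as in the proof of Lemma~\ref{lemma:I}), $\EE_\Ss(\tilde\phi-\EE_\Ss\tilde\phi)^{4}\le C^{M_t}\big((n\eta^{dM_t})^{-2}+(n\eta^{dM_t})^{-3}\big)$.

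The bias is handled exactly as in the proof of Theorem~\ref{theo:CvDistr}: because $\tilde\phi$ is the $\Zz$-compensated version of $\hat\phi$, one has $\EE_\Ss\tilde\phi=\EE_\Ss\hat\phi=(\Kk_{h,\eta}*\psi)\big(t,\vec z_S(t)\big)$ with $\Kk$ the product of $K$ and $H$ and $\psi(u,y)=f(y)\theta_S\big(u,\vec y_S(u)\big)$; since $t$ lies in the open interval $I_{M_t}$ and $h\to0$ a.s., for $n$ large the window $[t-h,t]$ sits inside $I_{M_t}$, on which $\psi$ is twice differentiable with the Hessian bound $Q_{M_t}$ of \ref{hyp:differentiability}, and Lemma~\ref{lemma:WandJones} applied with the anisotropic bandwidth $\mathrm{diag}(\eta^{2},\dots,\eta^{2},h^{2})$ yields $\abs{\EE_\Ss\tilde\phi-\phi}\le\tfrac{(dM_t+1)^{2}}{2}\max(h^{2},\eta^{2})Q_{M_t}C_{\Hh,K}$. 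This contributes the $h^{4},\eta^{4}$ terms to $\EE_\Ss(\tilde\phi-\phi)^{2}$ and the $h^{8},\eta^{8}$ terms to $\EE_\Ss(\tilde\phi-\phi)^{4}$, and, together with the fluctuation bounds above, the $(n\eta^{dM_t})^{-1},(n\eta^{dM_t})^{-2},(n\eta^{dM_t})^{-3}$ terms. Gathering all contributions and noting that every $M_t$-dependent multiplicative constant involves only $d$, $M_t$, $Q_{M_t}$, $F_0$, $F_\infty$, $\Theta$, $\norm{K}_2$, $\norm{\Hh}_2$, $\norm{\Hh}_\infty$ and the second moments of $\Hh$ and $K$ gives the asserted inequality with $\Clemma{3}$ of the stated form. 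I expect the only genuinely delicate point to be this bias step — identifying $\EE_\Ss\tilde\phi$ as a $(dM_t+1)$-dimensional convolution and checking that the joint regularity of $\psi$ on $[t-h,t]\times\RR^{dM_t}$ is exactly what makes Lemma~\ref{lemma:WandJones} deliver the sharp $\max(h^{2},\eta^{2})$ rate rather than a mere $o(1)$; the remaining work is routine moment estimation and book-keeping of the $M_t$-dependent constants.
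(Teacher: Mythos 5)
Your proposal is correct and follows essentially the same route as the paper: reduce $\EE_\Ss B^2$ to the four quantities $\EE_\Ss(\tilde\phi-\phi)^2$, $\EE_\Ss(\tilde\phi-\phi)^4$ and $\EE_\Ss\abs{1/\tilde f-1/f}^p$ for $p=2,4$ (the latter via Lemma~\ref{lemma:I}), treat the bias of $\tilde\phi$ through $\EE_\Ss\tilde\phi=(\Kk_{h,\eta}*\psi)$ and Lemma~\ref{lemma:WandJones}, and bound the fluctuation of the conditionally i.i.d.\ $\xi_k$'s by second/fourth moment (Jensen--Khintchine) estimates. The only difference is the cosmetic one of writing $B=(\tilde\phi-\phi)/\tilde f+\phi(1/\tilde f-1/f)$ instead of the paper's split through $1/f$, which produces the same terms after the same Young-type treatment of the cross term.
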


\begin{proof}
Since
\begin{align*}
\frac{\tilde\phi\big(t,\vec z_S(t)\big)}{\tilde f\big(\vec z_S(t)\big)}-\frac{\phi\big(t,\vec z_S(t)\big)}{f\big(\vec z_S(t)\big)}=\frac{\tilde \phi\big(t,\vec z_S(t)\big)-\phi\big(t,\vec z_S(t)\big)}{f\big(\vec z_S(t)\big)}+\frac{\tilde \phi\big(t,\vec z_S(t)\big)}{\tilde f\big(\vec z_S(t)\big)}-\frac{\tilde \phi\big(t,\vec z_S(t)\big)}{f\big(\vec z_S(t)\big)},
\end{align*}
we have
\begin{align*}
\EE_\Ss B^2&\!\le\! 2\Bigg(\!\frac{\EE_\Ss\abs{\tilde\phi\big(t,\vec z_S(t)\big)-\phi\big(t,\vec z_S(t)\big)}^2}{F_0^{2M_t}}+(\Theta F_\infty)^{2M_t} \EE_\Ss\abs{\frac{1}{\tilde f\big(\vec z_S(t)\big)}-\frac{1}{f\big(\vec z_S(t)\big)}}^2\\
&\qquad+\frac{1}{2}{\EE_\Ss\abs{\tilde \phi\big(t,\vec z_S(t)\big)-\phi\big(t,\vec z_S(t)\big)}^4}+\frac{1}{2}{\EE_\Ss\abs{\frac{1}{\tilde f\big(\vec z_S(t)\big)}-\frac{1}{f\big(\vec z_S(t)\big)}}^4}\Bigg).
\end{align*}
Note that for $q=2,4$,
\begin{align*}
\EE_\Ss\abs{\tilde\phi_{S,h,\eta}\big(t,\vec z_S(t)\big)\!-\!\phi\big(t,\vec z_S(t)\big)}^q&\!\!\!\leq 2^{q-1}\Big(\EE_\Ss\abs{\tilde\phi_{S,h,\eta}\big(t,\vec z_S(t)\big)\!-\!\EE_\Ss\tilde \phi_{S,h,\eta}\big(t,\vec z_S(t)\big)}^q\\
&\qquad+\abs{\EE_\Ss \tilde \phi_{S,h,\eta}\big(t,\vec z_S(t)\big)\!-\!\phi\big(t,\vec z_S(t)\big)}^q\Big),
\end{align*}
where on the one hand, $\EE_\Ss\tilde\phi_{S,h,\eta}\big(t,\vec z_S(t)\big)=(\Kk_{h,\eta}*\psi)\big(t,\vec z_S(t)\big)$ with $\Kk$ the product kernel of $H$ and $K$ and 
\[
\forall (u,y)\in[0,t]\times\RR^{dM_t},\psi(u,y)=f(y)\theta_S\big(u,\vec y_S(u)\big).
\]
Applying Lemma~\ref{lemma:WandJones} with kernel $\Kk$ and function $\psi$ gives
\begin{align*}
\abs{\EE_\Ss \tilde \phi_{S,h,\eta}\big(t,\vec z_S(t)\big)-\psi\big(t,\vec z_S(t)\big)}&\leq \frac{1}{2}(dM_t+1)^2 C_{\Hh,K}Q_{M_t}\max(\eta^2,h^2),
\end{align*}
where $\psi\big(t,\vec z_S(t)\big)=\phi\big(t,\vec z_S(t)\big)$ and $C_{\Hh,K}=\max\left(\int_\RR z^2\Hh(z)\dd z,\int_\RR u^2K(u)\dd u\right)$ is a finite constant since $\Hh$ and $K$ are compactly supported.
On the other hand define
\begin{align*}
\xi_k = H_\eta\left(\vec z_S(t)-\vec Z_S^k(t)\right) \int_0^t K_h(t-s)\theta_S \left(s,\vec Z_S^k(s)\right)\dd s-\EE_\Ss\tilde\phi_{S,h,\eta}\big(t,\vec z_S(t)\big).
\end{align*}
Then 
\begin{align*}
\EE_\Ss\abs{\tilde\phi_{S,h,\eta}\big(t,\vec z_S(t)\big)-\EE_\Ss\tilde\phi_{S,h,\eta}\big(t,\vec z_S(t)\big)}^q=\EE_\Ss\abs{\frac{1}{n}\sum_{k=1}^n\xi_k}^q.
\end{align*}
The $\xi_k$ are conditional $i.i.d.$ centred random variables given $\Ss$ so that for $q\le 2$, by Jensen inequality 
\begin{align*}
\EE_\Ss\abs{\frac{1}{n}\sum_{k=1}^n\xi_k}^q\leq \frac{F_\infty^{qM_t/2}\Theta^{qM_t}\norm{\Hh}_2^{qdM_t}}{(n\eta^{dM_t})^{q/2}}.
\end{align*}
For $q> 2$, Khintchine inequality gives
\begin{align*}
\EE_\Ss \abs{\frac{1}{n}\sum_{k=1}^n \xi_k}^q \!\!\!\leq \K_q\! \left(\!\frac{F_\infty^{qM_t/2}\Theta^{qM_t}\norm{\Hh}_2^{qdM_t}}{(n\eta^{dM_t})^{q/2}} + \frac{2^{q-2}F_\infty^{M_t} \Theta^{qM_t} (\norm{\Hh}_2^2\norm{\Hh}_\infty^{q-2})^{dM_t}}{(n\eta^{dM_t})^{q-1}}\!\right).
\end{align*}
Using Lemma~\ref{lemma:I} to control $\EE_\Ss \abs{\frac{1}{\tilde f\big(\vec z_S(t)\big)}-\frac{1}{f\big(\vec z_S(t)\big)}}^p$ for $p= 2,4$, lemma follows.
\end{proof}

\begin{lemma}\label{lemma:phi}
Under the assumptions of Proposition~\ref{prop:CvProba}, almost surely
\begin{align*}
\EE_\Ss\hat \phi_{s,h,\eta}\big(t,\vec z_S(t)\big)&\xrightarrow[n\to+\infty]{}\phi\big(t,\vec z_S(t)\big)\text{ and }\\ nh\eta^{dM_t}\var_\Ss\hat\phi_{s,h,\eta}\big(t,\vec z_S(t)\big) &\xrightarrow[n\to+\infty]{}\phi\big(t,\vec z_S(t)\big)\norm{K}_2^2\norm{\Hh}_2^{2dM_t}.
\end{align*}
\end{lemma}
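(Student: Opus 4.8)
The plan is to reduce both convergences to first- and second-moment computations for a single summand and then to standard kernel-approximation (Bochner) arguments; everything is carried out on the almost-sure event on which $h\to0$, $\eta\to0$ and $nh\eta^{dM_t}\to+\infty$, on which $M_t$ and the constants $\Theta,F_0,F_\infty$ of {\bf \ref{hyp:norm}} may be treated as fixed and finite. First I would write $\hat\phi_{S,h,\eta}\big(t,\vec z_S(t)\big)=\frac1n\sum_{k=1}^nX_k$ with $X_k=H_\eta\big(\vec z_S(t)-\vec Z_S^k(t)\big)\int_0^tK_h(t-s)\dd N_s^k$; since, given $\Ss$, the pairs $(N^k,Z^k)$ are i.i.d., one has $\EE_\Ss\hat\phi_{S,h,\eta}=\EE_\Ss X_1$ and $\var_\Ss\hat\phi_{S,h,\eta}=\frac1n\var_\Ss X_1$, so it suffices to prove $\EE_\Ss X_1\to\phi\big(t,\vec z_S(t)\big)$ and $h\eta^{dM_t}\EE_\Ss X_1^2\to\phi\big(t,\vec z_S(t)\big)\norm{K}_2^2\norm{\Hh}_2^{2dM_t}$, the subtracted term $h\eta^{dM_t}(\EE_\Ss X_1)^2$ being then $O(h\eta^{dM_t})=o(1)$. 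The structural fact I would use repeatedly is that, conditionally on $\Ss$ and $\Zz$, $N^1$ is an inhomogeneous Poisson process with intensity $s\mapsto\theta_S\big(s,\vec Z_S^1(s)\big)$, so $\tilde\EE\big[\int_0^tK_h(t-s)\dd N_s^1\big]=\int_0^tK_h(t-s)\theta_S\big(s,\vec Z_S^1(s)\big)\dd s$; moreover, for $n$ large enough the support $[t-h,t]$ of $K_h(t-\cdot)$ lies inside $I_{M_t}$ (since $t>S_{M_t}$ a.s.), so on that interval $\vec Z_S^1(s)=\vec Z_S^1(t)$ and $\theta_S\big(s,\vec Z_S^1(s)\big)=\theta_{S,M_t}\big(s,\vec Z_S^1(t)\big)$.

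For the bias, conditioning further on $\Zz$ and using {\bf \ref{hyp:existence}} (so that $\vec Z_S^1(t)$ has conditional density $f_{M_t}$) gives, for $n$ large, $\EE_\Ss X_1=(\Kk_{h,\eta}*\psi)\big(t,\vec z_S(t)\big)$ with $\Kk$ the product kernel of $K$ and $H$ and $\psi(u,y)=f_{M_t}(y)\theta_{S,M_t}(u,y)$, the identity already used in the proof of Theorem~\ref{theo:CvDistr}. Since $\int\Kk=1$, $\psi$ is continuous at $\big(t,\vec z_S(t)\big)$ by {\bf \ref{hyp:continuity}} and bounded near it by {\bf \ref{hyp:norm}}, a Bochner-type argument (write $\psi=\psi\big(t,\vec z_S(t)\big)+\big[\psi-\psi\big(t,\vec z_S(t)\big)\big]$, change variables, dominated convergence with dominating function $|\Kk|\cdot2\Theta^{M_t}F_\infty^{M_t}$) gives $\EE_\Ss X_1\to\psi\big(t,\vec z_S(t)\big)=f\big(\vec z_S(t)\big)\theta_S\big(t,\vec z_S(t)\big)=\phi\big(t,\vec z_S(t)\big)$. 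Here only the integrability of the kernels $K,\Hh$ is used, not their order-$2$ property, which is what allows this step under the weaker hypotheses of Proposition~\ref{prop:CvProba} (in particular with no differentiability assumption, so Lemma~\ref{lemma:WandJones} is deliberately not invoked).

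For the variance I would compute $\EE_\Ss X_1^2=\EE_\Ss\Big[H_\eta\big(\vec z_S(t)-\vec Z_S^1(t)\big)^2\,\tilde\EE\Big(\int_0^tK_h(t-s)\dd N_s^1\Big)^2\Big]$ by writing $N^1=\bar N^1+\Lambda^1$ and applying the Itô isometry exactly as in the proof of Theorem~\ref{theo:MSEcond}: the martingale part contributes $\int_0^tK_h^2(t-s)\theta_S\big(s,\vec Z_S^1(s)\big)\dd s$ and the cross term vanishes, so that $\tilde\EE\big(\int_0^tK_h(t-s)\dd N_s^1\big)^2=\int_0^tK_h^2(t-s)\theta_S\big(s,\vec Z_S^1(s)\big)\dd s+\Big(\int_0^tK_h(t-s)\theta_S\big(s,\vec Z_S^1(s)\big)\dd s\Big)^2$. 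The contribution of the squared compensator is bounded, for $n$ large, by $\Theta^{2M_t}\norm{K}_1^2\,\EE_\Ss H_\eta\big(\vec z_S(t)-\vec Z_S^1(t)\big)^2=O(\eta^{-dM_t})$, hence is $O(h)$ after multiplication by $h\eta^{dM_t}$ and disappears in the limit. In the main term one substitutes $v=(t-s)/h$ and $u=\big(\vec z_S(t)-y\big)/\eta$, factors out $h^{-1}$ and $\eta^{-dM_t}$, and passes to the limit by dominated convergence, using continuity of $\theta_{S,M_t}$ and $f_{M_t}$ ({\bf \ref{hyp:continuity}}), domination by {\bf \ref{hyp:norm}}, and the identities $\int_\RR K^2=\norm{K}_2^2$ and $\int_\RR\Hh^2=\norm{\Hh}_2^2$ (so the $dM_t$-fold product kernel contributes $\norm{\Hh}_2^{2dM_t}$). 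This yields $h\eta^{dM_t}\EE_\Ss X_1^2\to f\big(\vec z_S(t)\big)\theta_S\big(t,\vec z_S(t)\big)\norm{K}_2^2\norm{\Hh}_2^{2dM_t}=\phi\big(t,\vec z_S(t)\big)\norm{K}_2^2\norm{\Hh}_2^{2dM_t}$, which is the second claim.

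I expect the only real care --- more bookkeeping than obstacle --- to be the handling of the random dimension $dM_t$ together with the left edge of $I_{M_t}$: one must make ``for $n$ large enough'' precise (it holds for $n\ge n_0(\Ss)$, since $t-S_{M_t}>0$ a.s.) and check that the dominating functions in each dominated-convergence step are integrable for almost every fixed realization of $\Ss$, which follows from the compact support and boundedness of $\Hh$ and $K$ together with {\bf \ref{hyp:norm}}. All remaining steps are routine first- and second-moment kernel estimates.
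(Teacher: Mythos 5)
Your proof is correct and follows essentially the same route as the paper: the same i.i.d.\ reduction to the first two conditional moments of a single summand, the same Poisson/It\^o second-moment identity splitting the variance into a $\int K_h^2\theta$ term and a squared-compensator term, and the same kernel-approximation limits. The only cosmetic differences are that the paper invokes Lemma~I.4 of \citet{Bosq1987} where you carry out the Bochner/dominated-convergence argument by hand (making the localization $[t-h,t]\subset I_{M_t}$ explicit), and that you bound the squared-compensator contribution by $O(h)$ after normalization instead of identifying its limit via the kernel $\Kk^1$ and function $\psi_1$ as the paper does.
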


\begin{proof} We have
\begin{align*}
\EE_\Ss\hat\phi_{S,h,\eta}\big(t,\vec z_S(t)\big)=(\Kk_{h,\eta}*\psi)\big(t,\vec z_S(t)\big),
\end{align*}
with $\Kk$ the product kernel of $H$ and $K$ and \[\forall(u,y)\in[0,t]\times\RR^{dM_t},\psi(u,y)=f(y)\theta_S\big(u,\vec y_S(u)\big).\]
Applying Lemma~I.4 from \cite{Bosq1987}, with kernel $\Kk$ and function $\psi$ gives 
\begin{align}
\EE_\Ss \tilde \phi_{S,h,\eta}\big(t,\vec z_S(t)\big)\xrightarrow[n\to+\infty]{}\psi\big(t,\vec z_S(t)\big)=\phi\big(t,\vec z_S(t)\big).\label{eq:expectation}
\end{align}

Also, 
\begin{align*}
\EE_\Ss \hat \phi^2_{S,h,\eta}\big(t,\vec z_S(t)\big)&=\frac{1}{n}\EE_\Ss H^2_{\eta}\left(\vec z_S(t)-\vec Z_S^1(t)\right)\tilde \EE\left(\int_0^t K_h(t-s)\dd N_s^1\right)^2\\
&\qquad+\frac{n-1}{n}\EE_\Ss^2\hat\phi_{S,h,\eta}\big(t,\vec z_S(t)\big).
\end{align*}
As $N$ is a conditional Poisson process given $\Ss$ and $\Zz$,
\begin{align*}
\tilde \EE\left(\int_0^t K_h(t-s)\dd N_s^1\right)^2 &= \left(\int_0^t K_h(t-s)\theta_S\left(s,\vec Z_S^1(s)\right)\dd s\right)^2\\
&\qquad+ \int_0^t K_h^2(t-s)\theta_S\left(s,\vec Z_S(s)^1\right)\dd s.
\end{align*}
Define
\begin{align*}
\Kk_{h,\eta}^1(u,v,y)=K_h(u)K_h(v)H_\eta^2(y),\quad \Kk_{h,\eta}^2(u,y)=K_h^2(u)H_\eta^2(y)
\end{align*}
and $\psi_1$ a $(dM_t+2)$-variate function such that for all $(u,v,y)\in[0,t]^2\times\RR^{dM_t},$ $\psi_1(u,v,y)=f(y)\theta_S\big(u,\vec y_S(u)\big)\theta_S\big(v,\vec y_S(v)\big).$ Then
\begin{align*}
\var_\Ss \hat \phi^2_{S,h,\eta}\big(t,\vec z_S(t)\big)&=\frac{1}{nh\eta^{dM_t}}\left(\Kk_{h,\eta}^2*\psi\right)\big(t,\vec z_S(t)\big)\\
&\qquad+\frac{1}{n\eta^{dM_t}}\left(\Kk_{h,\eta}^1*\psi_1\right)\big(t,t,\vec z_S(t)\big) - \frac{1}{n}\EE_\Ss^2\hat \phi_{S,h,\eta}\big(t,\vec z_S(t)\big).
 \end{align*}
Applying Lemma~I.4 from \cite{Bosq1987}, with kernel $\Kk^1/\norm{\Hh}^{2dM_t}_2$ and function $\psi^1$, kernel $\Kk^2/\norm{K}^2_2\norm{\Hh}^{2dM_t}_2$ and function $\psi$ and using \eqref{eq:expectation} gives 
\begin{align*}
nh\eta^{dM_t}\var_\Ss\hat\phi_{s,h,\eta}\big(t,\vec z_S(t)\big) &\xrightarrow[n\to+\infty]{}\phi\big(t,\vec z_S(t)\big)\norm{K}_2^2\norm{\Hh}_2^{2dM_t}.
\end{align*}

\end{proof}

\begin{lemma}\label{lemma:phiDistr}
Under the assumptions of Theorem~\ref{theo:CvDistr},
\begin{align*}
(nh\eta^{dM_t})^{1/2}\frac{\hat\phi_{S,h,\eta}\big(t,\vec z_S(t)\big)- \EE_\Ss\hat\phi_{S,h,\eta}\big(t,\vec z_S(t)\big)}{\left(\phi\big(t,\vec z_S(t)\big) \norm{K}_2^2\norm{\Hh}_2^{2dM_t}\right)^{1/2}} \toDd\Nn(0,1).
\end{align*}
\end{lemma}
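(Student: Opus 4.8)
The plan is to recognise $\hat\phi_{S,h,\eta}\bigl(t,\vec z_S(t)\bigr)$ as a normalised sum of conditionally i.i.d.\ terms and to apply a triangular-array central limit theorem under $\PP_\Ss$. Writing
\[
U_k=H_\eta\!\left(\vec z_S(t)-\vec Z_S^k(t)\right)\int_0^t K_h(t-s)\,\dd N_s^k ,\qquad k=1,\dots,n,
\]
we have $\hat\phi_{S,h,\eta}\bigl(t,\vec z_S(t)\bigr)=\tfrac1n\sum_{k=1}^nU_k$, and given $\Ss$ the $U_k$ are i.i.d.\ with a common law that depends on $n$ through the bandwidths. First I would invoke Lyapunov's CLT for row-wise i.i.d.\ arrays with exponent $4$: it suffices to check, $\PP$-a.s., that
\[
\frac{n\,\EE_\Ss\bigl|U_1-\EE_\Ss U_1\bigr|^4}{\bigl(n\,\var_\Ss U_1\bigr)^2}=\frac{\EE_\Ss\bigl|U_1-\EE_\Ss U_1\bigr|^4}{n\,(\var_\Ss U_1)^2}\xrightarrow[n\to+\infty]{}0 .
\]

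For the denominator I would quote Lemma~\ref{lemma:phi}: since $\var_\Ss U_1=n\,\var_\Ss\hat\phi_{S,h,\eta}\bigl(t,\vec z_S(t)\bigr)$ and $nh\eta^{dM_t}\var_\Ss\hat\phi_{S,h,\eta}\bigl(t,\vec z_S(t)\bigr)$ converges a.s.\ to $v:=\phi\bigl(t,\vec z_S(t)\bigr)\norm{K}_2^2\norm{\Hh}_2^{2dM_t}$, which is positive by {\bf\ref{hyp:continuity}}, {\bf\ref{hyp:norm}} and the hypothesis $\theta_S\bigl(t,\vec z_S(t)\bigr)\neq0$ of Theorem~\ref{theo:CvDistr}, one gets $\var_\Ss U_1\sim v\,h^{-1}\eta^{-dM_t}$. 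For the numerator I would use $\bigl|U_1-\EE_\Ss U_1\bigr|^4\le 8\bigl(|U_1|^4+|\EE_\Ss U_1|^4\bigr)$ with $|\EE_\Ss U_1|^4=\bigl|\EE_\Ss\hat\phi_{S,h,\eta}\bigl(t,\vec z_S(t)\bigr)\bigr|^4=O(1)$ (again Lemma~\ref{lemma:phi}), and bound
\[
\EE_\Ss|U_1|^4=\EE_\Ss\!\left[H_\eta^4\!\left(\vec z_S(t)-\vec Z_S^1(t)\right)\,\tilde\EE\!\left(\int_0^tK_h(t-s)\,\dd N_s^1\right)^{\!4}\right].
\]
Conditionally on $\Zz,\Ss$ the inner integral is a deterministic function integrated against a Poisson process with intensity $\theta_S\bigl(\cdot,\vec Z_S^1(\cdot)\bigr)$; splitting it into its mean $\int_0^tK_h(t-s)\theta_S\bigl(s,\vec Z_S^1(s)\bigr)\,\dd s=O(1)$ (by $\norm{\theta_{S,M_t}}_\infty<\Theta^{M_t}$) and its compensated part, the cumulant formula for Poisson integrals gives a fourth central moment of order $\int K_h^4\theta_S+3\bigl(\int K_h^2\theta_S\bigr)^2=O(h^{-3})$, where {\bf\ref{hyp:K}} (i.e.\ $K\in\LL^4$) is exactly what makes $\int K_h^4\theta_S=O(h^{-3})$; hence $\tilde\EE\bigl(\int_0^tK_h(t-s)\,\dd N_s^1\bigr)^4\le C_{M_t}h^{-3}$ with an $\Ss$-measurable constant. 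Since $H_\eta=\Hh_\eta^{\otimes dM_t}$ with $\Hh$ bounded and compactly supported, a change of variables and {\bf\ref{hyp:norm}} give $\EE_\Ss H_\eta^4\!\left(\vec z_S(t)-\vec Z_S^1(t)\right)\le\norm{\Hh}_4^{4dM_t}F_\infty^{M_t}\,\eta^{-3dM_t}$. Altogether $\EE_\Ss\bigl|U_1-\EE_\Ss U_1\bigr|^4\le C'_{M_t}\,h^{-3}\eta^{-3dM_t}$ a.s.

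Combining the two bounds, the Lyapunov ratio is $\le C''_{M_t}\,(nh\eta^{dM_t})^{-1}$, which tends to $0$ a.s.\ because $nh\eta^{dM_t}\to+\infty$ a.s.; Lyapunov's theorem therefore gives $(\var_\Ss\hat\phi_{S,h,\eta})^{-1/2}\bigl(\hat\phi_{S,h,\eta}\bigl(t,\vec z_S(t)\bigr)-\EE_\Ss\hat\phi_{S,h,\eta}\bigl(t,\vec z_S(t)\bigr)\bigr)\toDd\Nn(0,1)$ under $\PP_\Ss$. Writing
\[
(nh\eta^{dM_t})^{1/2}\bigl(\hat\phi_{S,h,\eta}-\EE_\Ss\hat\phi_{S,h,\eta}\bigr)=\bigl(nh\eta^{dM_t}\var_\Ss\hat\phi_{S,h,\eta}\bigr)^{1/2}\cdot\frac{\hat\phi_{S,h,\eta}-\EE_\Ss\hat\phi_{S,h,\eta}}{(\var_\Ss\hat\phi_{S,h,\eta})^{1/2}},
\]
and noting that by Lemma~\ref{lemma:phi} the first factor converges a.s.\ to $v^{1/2}$, Slutsky's lemma yields the stated convergence. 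I expect the main obstacle to be the fourth-moment estimate of the Poisson integral (the clean route is the Poisson cumulant/compensator decomposition, which also pins down exactly where $K\in\LL^4$ enters), together with the bookkeeping needed to keep every constant $\Ss$-measurable, so that the a.s.\ bandwidth conditions of Theorem~\ref{theo:CvDistr} can genuinely drive the Lyapunov ratio to $0$ almost surely.
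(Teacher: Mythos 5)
Your proof is correct and follows essentially the paper's own route: a Lyapunov central limit theorem with exponent $4$ for the conditionally i.i.d.\ terms given $\Ss$, with the variance asymptotics supplied by Lemma~\ref{lemma:phi} and a fourth-moment bound on the kernel-weighted Poisson integral of order $h^{-3}\eta^{-3dM_t}$, so that the Lyapunov ratio is $O\bigl((nh\eta^{dM_t})^{-1}\bigr)\to0$ a.s. The only cosmetic differences are that the paper controls the fourth moment through the compensated-martingale decomposition and the Burkholder--Davis--Gundy inequality rather than the Poisson cumulant formula, and normalizes the summands directly by $\bigl(\phi\bigl(t,\vec z_S(t)\bigr)\norm{K}_2^2\norm{\Hh}_2^{2dM_t}\bigr)^{1/2}$ instead of self-normalizing by $\bigl(\var(\cdot|\Ss)\bigr)^{1/2}$ and finishing with Slutsky.
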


\begin{proof}
Denote
\begin{align*}
\hat L_k = (nh \eta^{d M_t})^{1/2}  \frac{ \displaystyle H_\eta \left(\vec z_S(t)-\vec Z_S^k(t)\right) \int_0^t K_h(t-s)\dd N_S^k-\EE_\Ss \hat\phi_{S,\eta}\big(t,\vec z_S(t)\big)}{n\left(\phi\big(t,\vec z_S(t)\big)\norm{K}_2^2\norm{\Hh}_2^{2dM_t}\right)^{1/2}},
\end{align*}
and
\begin{align*}
\hat L=(nh\eta^{dM_t})^{1/2}\frac{\hat\phi_{S,h,\eta}\big(t,\vec z_S(t)\big)- \EE_\Ss\hat\phi_{S,h,\eta}\big(t,\vec z_S(t)\big)}{\left(\phi\big(t,\vec z_S(t)\big) \norm{K}_2^2\norm{\Hh}_2^{2dM_t}\right)^{1/2}},
\end{align*}
so that $\hat L=\sum_{k=1}^n\hat L_k$.
The variables $(\hat L_1,\hat L_2,\ldots)$ are conditional independent random variables given $\Ss$ and
\begin{align*}
\forall k=1,\ldots,n, \EE_\Ss \hat L_k=0\text{ and }\var_\Ss\hat L=\sum_{k=1}^n \var_\Ss\hat L_k.
\end{align*}
To conclude it remains to check that the Lyapunov condition 
\begin{align} \label{eq:Lyapunov}
(\var_\Ss \hat L)^{-(2+\delta)/2} n\EE_\Ss \abs{\hat L_1}^{2+\delta}\to 0,
\end{align}
is satisfied for some $\delta>0$.
Remark that
\begin{align*}
\var_\Ss \hat L &= \frac{nh\eta^{dM_t}}{\phi\big(t,\vec z_S(t)\big)\norm{K}_2^2\norm{\Hh}_2^{2dM_t}} \var_\Ss \hat\phi_{S,h,\eta}\big(t,\vec z_S(t)\big),
\end{align*}
and Lemma~\ref{lemma:phi} gives
\begin{align*}
nh\eta^{dM_t}\var_\Ss\hat\phi_{s,h,\eta}\big(t,\vec z_S(t)\big) &\xrightarrow[n\to+\infty]{}\phi\big(t,\vec z_S(t)\big)\norm{K}_2^2\norm{\Hh}_2^{2dM_t},
\end{align*}
so that $\var_\Ss \hat L\to 1$ as $n\to+\infty$.
It suffices to show that $n\EE_\Ss (\lvert\hat L_1\rvert^{2+\delta})$ tends to $0$ for some $\delta>0$ to get the Lyapunov condition~\eqref{eq:Lyapunov} satisfied.
Let us take $\delta=2$, then
\begin{align*}
\EE_\Ss\abs{\hat L_1}^4&\leq \left(\frac{nh\eta^{dM_t}}{\phi\norm{K}_2^2\norm{\Hh}_2^{2dM_t}}\right)^2\frac{9}{n^4}\Bigg[\EE_\Ss \abs{H_\eta\left(\vec z_S(t)-\vec Z_S^1(t)\right)\int_0^t K_h(t-s)\dd \bar N_s^1}^4\\
&\qquad+\EE_\Ss\abs{H_\eta\left(\vec z_S(t)-\vec Z_S^1(t)\right)\int_0^t K_h(t-s)\theta_S\left(s,\vec Z_S^1(s)\right) \dd s}^4\\
&\qquad+\EE_\Ss^4 \hat\phi_{S,h,\eta}\big(t,\vec z_S(t)\big)\Bigg].
\end{align*}
Basic martingale properties as well as the Burkholder-Davis-Gundy inequality give
\begin{align*}
n\EE_\Ss\abs{\hat L_1}^4&\leq C\frac{h^2 \eta^{2dM_t}}{n}\left(\frac{1}{\eta^{dM_t}}+\frac{1}{\eta^{3dM_t}}+h^8+\eta^8+1\right),
\end{align*}
where the constant $C$ depends only on $\norm{\Hh}_2$, $\norm{K}_2$, $\int_\RR z^2\Hh(z)\dd z$, $\int_\RR z^2 K(z)\dd z$, $\norm{\theta_S}_\infty$, $F_\infty$, $Q_{M_t}$ and $dM_t$. We can conclude that $n\EE_\Ss(\hat L_1^4)\to 0$ as $n\to +\infty$.
Lemma follows.
\end{proof}

\section*{Acknowledgement}

\bibliography{IntensityCoxProcess}

\end{document}